\documentclass[11pt,a4paper,reqno]{amsart}

\usepackage[T1]{fontenc}
\usepackage{microtype, a4wide, textcomp,fbb}
\usepackage{
    amsmath,  amssymb,  amsthm,   amscd,
    gensymb,  graphicx, etoolbox, 
    booktabs, stackrel, mathtools    
}
\usepackage[usenames,dvipsnames]{xcolor}
\definecolor{darkblue}{rgb}{0,0,0.7}
\definecolor{darkred}{rgb}{0.7,0,0}
\usepackage[colorlinks=true, linkcolor=darkred, citecolor=darkblue, urlcolor=blue, pagebackref=true, breaklinks=true]{hyperref}
\usepackage[capitalise]{cleveref}
\usepackage{placeins}
\usepackage{relsize}
\setlength{\marginparwidth}{2cm}
\usepackage{todonotes}
\usepackage{soul}

\usepackage{tikz}
\usetikzlibrary{arrows}
\usetikzlibrary{shapes}
\usepackage{float}
\usepackage{tabularx}
\usepackage{kantlipsum}
\usepackage{array}
\usepackage[margin=2.6cm]{geometry}



\newtheorem{proposition}{Proposition}[section]
\newtheorem{lemma}[proposition]{Lemma}
\newtheorem{theorem}[proposition]{Theorem}

\newtheorem{corollary}[proposition]{Corollary}
\newtheorem{question}[proposition]{Question}

\theoremstyle{definition}
\newtheorem{remark}[proposition]{Remark}

\newtheorem{example}[proposition]{Example}
\newtheorem{definition}[proposition]{Definition}


\newenvironment{customthm}[1]
  {\innercustomthm\itshape}
  {\endinnercustomthm}


\newcommand{\reg}{{\rm reg}}


\tikzstyle{place}=[draw,circle,minimum size=1mm,inner sep=1pt,outer sep=-1.1pt,fill=black]

\usetikzlibrary{shapes.geometric}
\tikzstyle{places}=[draw,rectangle,minimum size=8pt,inner sep=0pt]
\tikzstyle{placesf}=[draw,rectangle,minimum size=5pt,inner sep=0pt]
\tikzstyle{placec}=[draw,circle,minimum size=8pt,inner sep=0pt]
\tikzstyle{placecf}=[draw,circle, minimum size=5pt,inner sep=0pt]

\setlength{\parskip}{2pt}

\def\K{\mathbb{K}}
\def\G{\mathcal{G}}
\def\D{\mathcal{D}}
\def\reg{\mathrm{reg}}
\def\pd{\mathrm{pd}}
\def\l{\langle}
\def\r{\rangle}

\begin{document}

\title{Castelnuovo-Mumford regularity of the closed neighborhood ideal of a graph}

\author{Shiny Chakraborty}
\address{The University of Texas at Dallas}
\email{shiny.chakraborty@utdallas.edu}

\author{Ajay P. Joseph}
\address{National Institute of Technology Karnataka, India}
\email{ajaymath.217ma001@nitk.edu.in}

\author{Amit Roy}
\address{Chennai Mathematical Institute, India}
\email{amitiisermohali493@gmail.com}

\author{Anurag Singh}
\address{Indian Institute of Technology Bhilai, India}
\email{anurags@iitbhilai.ac.in}

\keywords{closed neighborhood ideal, Castelnuovo-Mumford regularity, matching number, chordal graphs}
\subjclass[2010]{13F55, 05E40}

\vspace*{-0.4cm}
\begin{abstract}
Let $G$ be a finite simple graph and let $NI(G)$ denote the closed neighborhood ideal of $G$ in a polynomial ring $R$. We show that if $G$ is a forest, then the Castelnuovo-Mumford regularity of $R/NI(G)$ is the same as the matching number of $G$, thus proving a conjecture of Sharifan and Moradi in the affirmative. We also show that the matching number of $G$ provides a lower bound for the Castelnuovo-Mumford regularity of $R/NI(G)$ for any $G$. Furthermore, we prove that, if $G$ contains a simplicial vertex, then $NI(G)$ admits a Betti splitting, and consequently, we show that the projective dimension of $R/NI(G)$ is also bounded below by the matching number of $G$, if $G$ is a forest or a unicyclic graph. 

\end{abstract}

\maketitle

\section{Introduction} 
Combinatorial commutative algebra is an exciting branch of research living at the junction of three areas in mathematics - algebra, combinatorics, and topology. Bridging these areas is a class of homogeneous ideals namely, the square-free monomial ideals. For a long time now, commutative algebraists have been interested in studying the properties of finite, simple graphs using the monomial ideals as a tool, aspiring to make a dictionary between the world of combinatorics and that of algebra. It is along this direction that Villarreal's work \cite{CM} showed how the edges of a finite, simple graph are used to construct a class of monomial ideals, known as the edge ideal, and how the properties of this associated ideal can be studied using the properties of the graph and vice versa. Since then, different kinds of monomial ideals of polynomial rings associated with graphs have been carefully studied. Some of them are cover ideals \cite{FS,AVT}, path ideals \cite{AB,HVT}, t-clique ideals \cite{MKA,SM2}, splitting monomial ideals \cite{BoD,FHVT}, weighted oriented edge ideals \cite{BCDMS, MPo}, Stanley-Reisner ideals of $r$-independence complexes of graphs \cite{DRSV23, PaRo}, etc. 

 The study of such ideals has instilled curiosity and enthusiasm to delve further into the techniques and methodologies in extracting information about the structure of the underlying quotient modules. Sharifan and Moradi \cite{SM} introduced a special class of square-free monomial ideal, viz. the closed neighborhood ideal $NI\left(G\right)$ of a graph $G$. Let $G$ be a finite simple graph with the vertex set $V(G)=\{x_1,\ldots,x_n\}$ and let $R_G=\mathbb K[x_1,\ldots,x_n]$ be the polynomial ring in $n$ variables over a field $\mathbb K$. We simply use $R$ in place of $R_G$ whenever the underlying graph $G$ is clear. The ideal $NI(G)$ is formed by associating a monomial to each closed neighbor of the vertices of G (see \Cref{definition}). The Castelnuovo-Mumford regularity, simply called the regularity, and the projective dimension of $R/NI(G)$ are two important numerical invariants of the $R$-module $R/NI(G)$. In general, if $M$ is a finitely generated $R$-module, then the projective dimension and the regularity of $M$ help us to understand the structure of the minimal free resolution of $M$, which in turn, gives us useful information about the underlying module. Sharifan and Moradi analyzed the projective dimension $\pd(R/NI(G))$ and the regularity $\reg(R/NI(G))$ of $R/NI(G)$, by relating them to combinatorial properties of the graph $G$, particularly, its matching number $a_G$. Following this, a considerable amount of research has been done to understand the structure and properties of the closed neighborhood ideals from algebraic and combinatorial viewpoints.

In \cite{HW}, Honeycutt and Sather-Wagstaff described the minimal irreducible decompositions of $NI(G)$ in terms of the minimal dominating sets of $G$. They also characterized the trees whose closed neighborhood ideals are Cohen–Macaulay. Along this direction, Leamann \cite{JL} characterized all chordal and bipartite graphs that have Cohen-Macaulay closed neighborhood ideals. Nasernejad, Qureshi, Bandari, and Musapasaouglu in \cite{NQBM} investigated the normally torsion-free property and the (strong) persistence property of the closed neighborhood ideals of trees and cycles. Moreover, Nasernejad and Qureshi proved in \cite{NQ} that the closed neighborhood ideals of strongly chordal graphs are normally torsion-free. Nasernejad, Bandari, and Roberts found a sufficient criterion for the normality of arbitrary monomial ideals in \cite{NBR}, and as an application, they proved that the closed neighborhood ideals of complete bipartite graphs are normal. Recently, the structure of the minimal generating sets of the Alexander dual of $NI(G)$ was considered in \cite{NMe}.

Our interest in this direction has been mainly driven by the work of Sharifan and Moradi in \cite{SM}. They investigated the Castelnuovo-Mumford regularity and the projective dimension of the closed neighborhood ideals of various classes of graphs; for example, the generalized star graphs, forests, $m$-book graphs, etc. For a forest $G$, it was proved in \cite[Theorem 2.5]{SM} that both the regularity and projective dimension of $R/NI(G)$ are bounded below by the matching number of $G$. 
The authors further conjectured \cite[Conjecture 2.11]{SM} that for forests, the regularity of $R/NI(G)$ is the same as the matching number of $G$. In this paper, we prove this conjecture. More specifically, we prove the following.

\begin{theorem}\label{conjecture proof}
    Let $G$ be a forest and let $\reg(R/NI(G))$ denote the Castelnuovo-Mumford regularity of $R/NI(G)$. Then, $\reg(R/NI(G))= a_G$, where $a_G$ denotes the matching number of $G$.
\end{theorem}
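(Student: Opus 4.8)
Since the reverse inequality $\reg(R/NI(G))\ge a_G$ is available (it is established separately in this paper, for arbitrary $G$), the real task is to prove $\reg(R/NI(G))\le a_G$ when $G$ is a forest. The plan is to argue by induction on $|V(G)|$, but with a strengthened statement: for a forest $G$ and a subset $W\subseteq V(G)$, set $NI_W(G)=\bigl\langle \prod_{u\in N_G[v]}x_u : v\in W\bigr\rangle$, and show $\reg(R/NI_W(G))\le a_G$ for all pairs $(G,W)$ lying in a suitable class $\mathcal{C}$ (closed under the reductions below) containing every pair $(G,V(G))$; the case $W=V(G)$ then gives the theorem. Such a strengthening is forced, because $NI_W(G):x$ and $NI_W(G)+(x)$ are again of the form $NI_{W'}(G')$ with $G'$ smaller but \emph{not} of the form $NI(G')$; and one must be careful, since for an arbitrary $W$ the inequality can fail (e.g.\ when $W$ is a single high-degree vertex).

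The engine is the short exact sequence $0\to R/(NI_W(G):x_a)(-1)\xrightarrow{x_a}R/NI_W(G)\to R/(NI_W(G)+(x_a))\to 0$, which gives
\[
\reg\bigl(R/NI_W(G)\bigr)\ \le\ \max\bigl\{\,\reg\bigl(R/(NI_W(G):x_a)\bigr)+1,\ \reg\bigl(R/(NI_W(G)+(x_a))\bigr)\,\bigr\}.
\]
The choice I would make is $a=y$, the neighbour of a leaf $\ell$ of $G$. Two elementary observations make this effective. First, $y$ lies in every maximum matching of $G$ (a maximum matching avoiding $y$ would avoid $\ell$ too, and could be enlarged by $\ell y$), so $a_{G\setminus y}=a_G-1$. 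Second, a direct computation from the definition yields $NI_W(G):x_y=NI_{W\setminus\{y\}}(G\setminus y)$ whenever $y\in W$ and $y$ has a leaf-neighbour in $W$ (the only genuinely new generator $\prod_{u\in N_G(y)}x_u$ is a multiple of the variable of that leaf, which is already a generator in $G\setminus y$), and $NI_W(G):x_y=NI_W(G\setminus y)$ whenever $y\notin W$. In both cases the colon ideal is $NI_{W'}(G\setminus y)$ for the appropriate $W'$, and by induction $\reg(R/(NI_W(G):x_y))+1\le a_{G\setminus y}+1=a_G$.

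For the quotient term, every generator attached to a vertex of $N_G[y]$ is divisible by $x_y$, so modulo $x_y$ one is left with $NI_{W\setminus N_G[y]}(G\setminus y)$. Since $G$ is a forest, $G\setminus y$ breaks into connected components $C_1,\dots,C_m$, one through each neighbour $p_i$ of $y$, and the surviving generators fall into disjoint sets of variables, one per $C_i$; hence $\reg(R/(NI_W(G)+(x_y)))=\sum_i\reg\bigl(\,\cdot/NI_{(W\cap V(C_i))\setminus\{p_i\}}(C_i)\bigr)$. Applying the induction hypothesis to each pair $(C_i,(W\cap V(C_i))\setminus\{p_i\})$ and using additivity of the matching number over components, this is at most $\sum_i a_{C_i}=a_{G\setminus y}=a_G-1<a_G$. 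Combining the two estimates closes the induction.

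The crux — and where I expect the real work — is the definition of $\mathcal{C}$ and the verification of its stability: one must check that $(G,V(G))\in\mathcal{C}$, that $\mathcal{C}$ is preserved under the passages $(G,W)\rightsquigarrow(G\setminus y,W')$ above, and, most delicately, that within $\mathcal{C}$ one can \emph{always} choose the splitting vertex $y$ so that the colon identity applies. The obstruction is precisely a leaf whose neighbour lies in $W$ while the leaf itself does not; if such a configuration persisted it would spoil the clean form of the colon ideal. I expect $\mathcal{C}$ to be describable in terms of how the "boundary" vertices $p_i$ accumulate under repeated deletion, with the bookkeeping $a_{G\setminus y}=a_G-1$ re-used at each stage; alternatively, a trapped leaf can be eliminated in advance by a depolarization argument, since its variable occurs in exactly one generator of $NI_W(G)$. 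Everything else is the routine short-exact-sequence estimate together with the two monomial identities above.
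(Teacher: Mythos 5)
Your engine (induction on $|V(G)|$, the colon/quotient estimate at the variable of a leaf's neighbour $y$, and the bookkeeping $a_{G\setminus y}=a_G-1$) is the same one the paper uses, and your computations of $(NI_W(G):x_y)$ and of the quotient modulo $x_y$ are correct as far as they go. But the argument as written has a genuine gap, and you have located it yourself: everything rests on a class $\mathcal{C}$ of pairs $(G,W)$ that is never defined. The strengthened inequality $\reg(R/NI_W(G))\le a_G$ is \emph{false} for general $W$ (as you note, a single high-degree vertex already violates it), so until $\mathcal{C}$ is pinned down, shown to contain every $(G,V(G))$, shown to be closed under both passages $(G,W)\rightsquigarrow(G\setminus y,W')$, and shown always to admit a splitting vertex $y$ for which your colon identity applies, the induction does not close. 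That is not routine bookkeeping --- it is the entire difficulty --- and the ``depolarization'' escape route for a trapped leaf is only a gesture, not an argument.

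The paper shows the generalization to $NI_W$ is unnecessary if one chooses $y$ more carefully. After reducing to trees via \Cref{component lemma}, root the tree and let $x_1$ be a leaf of maximal level, with $N_T(x_1)=\{y\}$ and $N_T(y)=\{b,x_1,\dots,x_t\}$, where every $x_i$ is a leaf. Then $(NI(T):y)=\langle x_1,\dots,x_t\rangle+NI(T')$ with $T'=T\setminus\{y,x_1,\dots,x_t\}$ an honest tree (the variable generators live on disjoint variables and contribute nothing to the regularity, by \Cref{reg sum}), while $\langle NI(T),y\rangle=\langle NI(T''),y\rangle$ with $T''=T\setminus\{x_1,\dots,x_t\}$ again an honest tree; the monotonicity $\reg(R/\langle I,x\rangle)\le\reg(R/I)$ from \Cref{lemma1} then bounds the quotient branch by $\reg(R_{T''}/NI(T''))\le a_{T''}\le a_T$, and $a_{T'}+1\le a_T$ handles the colon branch. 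The induction therefore never leaves the class of closed neighborhood ideals of trees, and no auxiliary class $\mathcal{C}$ is needed. If you want to salvage your version, the cleanest fix is to adopt this choice of $y$ rather than to axiomatize $\mathcal{C}$.
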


We also show that the regularity part of result \cite[Theorem 2.5]{SM} can be generalized to all graphs using a recent topological result by Matsushita and Wakatsuki \cite{MaWa}. In particular, we prove the following.

\begin{theorem}\label{thm:conj}
    For any graph $G$, $\reg(R/NI(G))\ge a_G$, where $a_G$ is the matching number of $G$.
\end{theorem}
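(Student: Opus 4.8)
The plan is to identify $NI(G)$ with the Stanley--Reisner ideal of the dominance complex of $G$ and then read the bound off Hochster's formula together with the recent topological results of Matsushita and Wakatsuki. Recall that the \emph{dominance complex} $\mathcal{D}(G)$ is the simplicial complex on the vertex set $V(G)$ whose faces are the subsets $S$ such that $V(G)\setminus S$ is a dominating set of $G$. Since $V(G)\setminus S$ fails to dominate $G$ precisely when $N_G[x]\subseteq S$ for some vertex $x$, the minimal non-faces of $\mathcal{D}(G)$ are exactly the closed neighborhoods $N_G[x]$, and hence $I_{\mathcal{D}(G)}=NI(G)$; this is essentially the reformulation of $NI(G)$ underlying the irreducible decomposition in \cite{HW}. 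We may assume $G$ has no isolated vertices, since adding one only tensors $R/NI(G)$ with $\K[x]/(x)$ and changes neither $\reg(R/NI(G))$ nor $a_G$.

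Next I would record the form of Hochster's formula we need: for a simplicial complex $\Delta$ on a vertex set $V$,
\[
\reg(R/I_\Delta)=\max\bigl\{\,\ell+1 : \widetilde{H}_\ell(\Delta_W;\K)\neq 0 \text{ for some } W\subseteq V\,\bigr\},
\]
where $\Delta_W$ is the induced subcomplex on $W$. Taking $\Delta=\mathcal{D}(G)$ and $W=V(G)$, it suffices to exhibit an integer $\ell\ge a_G-1$ with $\widetilde{H}_\ell(\mathcal{D}(G);\K)\neq 0$. This is where \cite{MaWa} enters: Matsushita and Wakatsuki describe the homotopy type of $\mathcal{D}(G)$, and the consequence we invoke is that $\mathcal{D}(G)$ is homotopy equivalent to a wedge of spheres that includes a sphere of dimension at least $a_G-1$. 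Since the homology of a wedge of spheres is free, $\widetilde{H}_\ell(\mathcal{D}(G);\mathbb{Z})\neq 0$ then forces $\widetilde{H}_\ell(\mathcal{D}(G);\K)\neq 0$ over every field $\K$ (this free‑ness is what makes the bound field‑independent, as \Cref{thm:conj} requires), and substituting into the displayed equality gives $\reg(R/NI(G))\ge \ell+1\ge a_G$.

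The substantive ingredient is thus the Matsushita--Wakatsuki theorem, and what remains is essentially bookkeeping; the delicate point is to confirm that their description really supplies reduced homology in a degree $\ge a_G-1$ and not merely in lower degrees. As a sanity check on the bound, the elementary inequalities $\gamma(G)\le\alpha(G)=n-\tau(G)\le n-a_G$ (a maximum independent set dominates, and a vertex cover meets every edge of a maximum matching) show that $\dim\mathcal{D}(G)=n-\gamma(G)-1\ge a_G-1$, so a face---and hence potentially homology---of the required dimension is available; the non‑vanishing in that range is the genuine topological content drawn from \cite{MaWa}. If their statement is phrased through a vertex‑deleted or induced subcomplex rather than $\mathcal{D}(G)$ itself, one simply uses the corresponding $W$ in the displayed formula instead of $W=V(G)$, which Hochster's formula accommodates without change. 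Finally, \Cref{conjecture proof} shows the inequality is an equality for forests, so this is the best bound one can expect at this level of generality.
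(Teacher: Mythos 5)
Your reduction---identifying $NI(G)$ with the Stanley--Reisner ideal of the dominance complex $\D(G)$ and using Hochster's formula to reduce the theorem to exhibiting nonzero reduced homology of $\D(G)$ in some degree $\ell\ge a_G-1$---is exactly the paper's strategy, and your verification that $I_{\D(G)}=NI(G)$ and that $\dim\D(G)=n-\gamma(G)-1\ge a_G-1$ is correct. The gap is in the one step that carries all of the content: you assert that Matsushita and Wakatsuki show $\D(G)$ is homotopy equivalent to a wedge of spheres containing a sphere of dimension at least $a_G-1$. They prove no such thing for a general graph $G$: their Theorem~1.1 identifies the combinatorial Alexander dual of $\D(G)$ with the neighborhood complex of the complement graph $G^c$, and explicit homotopy types are computed only for special families. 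A wedge-of-spheres description cannot hold in this generality (neighborhood complexes, hence their Alexander duals, realize very general homotopy types, torsion included), and even the weaker assertion that there is homology in degree $\ge a_G-1$ is precisely the statement the theorem needs --- you have relocated the difficulty into an unverified citation rather than resolved it, as your own ``delicate point'' caveat essentially concedes.

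What actually closes this gap in the paper is a three-step chain that your write-up omits: (i) \cite[Theorem 1.1 \& Lemma 2.3]{MaWa} relate $\D(G)$, via combinatorial Alexander duality, to the neighborhood complex of $G^c$; (ii) Lov\'asz's theorem \cite{lovasz} converts the resulting connectivity information into the chromatic bound $\chi(G^c)\ge n-\text{h-}\dim(\D(G))-1$, where $\text{h-}\dim(\D(G))$ is the top degree with $\widetilde H_i(\D(G);\mathbb Z)\neq 0$; and (iii) the elementary observation that assigning one colour to each edge of a maximum matching of $G$ and one to each unmatched vertex properly colours $G^c$, so $\chi(G^c)\le n-a_G$. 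Chaining these with Hochster's formula gives $\reg(R/NI(G))\ge \text{h-}\dim(\D(G))+1\ge n-\chi(G^c)\ge a_G$. To repair your argument you must either supply steps (i)--(iii) or give an independent proof that $\widetilde H_\ell(\D(G);\K)\neq 0$ for some $\ell\ge a_G-1$; the dimension count alone, as you note, only shows that a face of the required dimension exists.
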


Motivated by this, we examine the relationship between  $\pd(R/NI(G))$, and the matching number $a_G$ for various families of $G$. As our first result in this direction, we show that if $G$ is a graph containing a simplicial vertex $x$, then $NI(G)$ admits a Betti splitting (cf. \Cref{chordal closed neighborhood}). Furthermore, if $x$ is a leaf vertex of $G$ and $\pd(R_{G'}/NI(G'))\ge a_{G'}$, then we have $\pd(R_G/NI(G))\ge a_G$, where $G'=G\setminus N_G[x]$ . As a corollary of this result (see \Cref{forest pd}), we get an alternate proof of the fact that $\pd(R/NI(G))\ge a_G$, when $G$ is a forest. In \Cref{rem:pdoftree}, we show that, unlike the regularity case, the inequality concerning the projective dimension of forests and the matching number could be strict. In \Cref{unicyclic}, we prove that if $G$ is a unicyclic graph, then $\pd(R/NI(G))\ge a_G$. Moreover, in \Cref{cycle remark}, we show that both the inequalities involving projective dimension, regularity, and matching number of unicyclic graphs in \Cref{unicyclic} and \Cref{thm:conj} could be strict. In \Cref{chordal not bounded below}, we point out that, unlike the regularity, the projective dimension of the closed neighborhood ideal of a graph is not necessarily bounded below by the matching number of $G$ even if $G$ is chordal. \Cref{wheel result} contains a formula for $\pd(R_G/NI(G))$ in terms of the matching number $a_G$, where $G$ is a wheel graph. In \Cref{whisker}, we show that if $G$ is any graph and $G'$ is a graph obtained from $G$ by attaching a whisker on each vertex of $G$, then $\reg(R/NI(G'))=\pd(R/NI(G'))=a_{G'}$. 

This article is organized as follows. In \Cref{section 2}, we recall relevant definitions and auxiliary results from graph theory and commutative algebra. We also state and prove some preliminary results concerning the closed neighborhood ideals that will be used in the subsequent section. In \Cref{section 3}, we give proofs of all our results stated above. Finally, in \Cref{section 4}, we outline some questions for future research.

\section{Preliminaries}\label{section 2}
In this section, we recall preliminary notions and terminologies from graph theory and commutative algebra that will be used throughout the article. In subsection \ref{graph}, we give various definitions related to graphs. In subsection \ref{algebra}, we set forth the ground of basic homological algebra terminology necessary to understand the main results of the paper. In subsection \ref{known results}, we present a few definitions pertaining to closed neighborhood ideals and state some results that will be used in \Cref{section 3}. 

\subsection{Graph theoretic notions}\label{graph}

A {\it graph} $G$ is a pair $(V (G), E(G))$, where $V(G)$ is the set of vertices and $E(G) \subseteq V(G)\times V(G)$ is the set of edges. For simplicity of notations, we sometimes write $V$ and $E$ to denote $V(G)$ and $E(G)$, respectively. A graph is {\it finite } if it has finite number of vertices and edges.
A graph without loops and multiple edges is called a {\it simple graph}.
The {\it complement of $G$}, denoted by $G^c$, is a graph whose vertex set is $V(G)$ and edge set is $\{\{x,y\}: \{x,y\}\notin E(G)\}$. If $x$ is a vertex of $G$, then by $G\setminus x$ we mean the graph on the vertex set $V\setminus\{x\}$ with edge set $\{\{a,b\}\in E: x\notin \{a,b\}\}$. Similarly, if $x_1,\ldots,x_r$ are some vertices of $G$, then $G\setminus \{x_1,\ldots,x_r\}$ denotes the graph on the vertex set $V\setminus \{x_1,\ldots,x_r\}$ with edge set $\{\{a,b\}\in E: x_i\notin \{a,b\}\text{ for each }i\in[r]\}$. For a vertex $u$ of $G$, the set $\{v \in V : \{u,v\}\in E\}$ is called the set of {\it neighbors} of $u$, and is denoted by $N_G(u)$. 
 The set of \textit{closed neighbors} of $u$ in $G$ is $N_G(u)\cup \{u\}$ and is denoted by $N_G[u]$. Whenever the underlying graph $G$ is clear, we denote the set $N_G(u)$ and $N_G[u]$ by $N(u)$ and $N[u]$, respectively. The number $|N(u)|$ is called the degree of $u$ and is denoted by $\deg(u)$. If for some $u\in V(G)$, $\deg(u)=1$, then $u$ is called a {\it leaf} of $G$. Given a graph $G$, to add a {\it whisker} at a vertex $x$ of $G$, one simply adds a new vertex $y$ and an edge connecting $y$ and $x$ to $G$. Note that in the new graph, $y$ is a leaf.

A {\it cycle} $C_k$ of length $k$ is a graph on the vertex set $\{x_1,\ldots,x_k\}$ with edge set $\{\{x_1,x_k\},\{x_i,x_{i+1}\}: 1\le i\le k-1\}$. Let $G$ be any arbitrary graph, and let $A$ be a subset of $V(G)$. Then, the {\it induced subgraph of $G$ on $A$}, denoted
$G[A]$, is the graph with vertex set $V(G[A]) = A$ and edge
set $E(G[A]) = \{e \in E(G) : e \subseteq A\}$. 
Let $A=\{x_{i_1},\ldots,x_{i_k}\}\subseteq V(G)$ be such that $G[A]\cong C_k$ and $E(G[A])=\{\{x_{i_k},x_{i_1}\},\{x_{i_j},x_{i_{j+1}}\}: 1\le j\le k-1\}$, then we simply write the cycle $G[A]$ as $x_{i_1}\cdots x_{i_k}$. A graph is called {\it unicyclic} if it is connected and contains exactly one cycle. A \textit{forest} is a graph without any cycle. A connected forest is called a {\it tree}. A graph on the vertex set $\{x_1,\ldots,x_n\}$ is called a {\it path graph} of length $n-1$, denoted $P_n$, if $E(P_n)=\{\{x_i,x_{i+1}\}: 1\le i\le n-1\}$. The graph $P_n$ is an example of a tree. The star graph of order $n$, denoted $S_{n}$, is a tree with $n$ vertices among which one vertex has its degree $n-1$, while the other $n-1$ vertices each have degree $1$. 

A graph $G$ is called \textit{chordal} if it contains no induced cycle of length $4$ or more. Note that forests are, in particular, chordal graphs.
Another important class of chordal graphs is the complete graphs. For a positive integer $n$, a \textit{complete graph} $K_{n}$ is a graph on $n$ vertices such that there is an edge between any two distinct vertices. If $G$ is a chordal graph, then each induced subgraph $H$ of $G$ contains at least one vertex $z$ such that $N_H[z]$ is a complete graph (see \cite{Dirac}). Such a vertex is called a {\it simplicial vertex}. 

A graph $G$ is said to be a \textit{bipartite graph} if its vertex set $V$ can be partitioned into two disjoint subsets $V_{1}$ and $V_{2}$ such that each edge of $G$ connects a vertex of $V_{1}$ to a vertex of $V_{2}$. If every vertex of $V_1$ is connected with every vertex of $V_2$, then we say that the bipartite graph is {\it complete} and it is denoted by $K_{m,n}$, where $|V_1|=m$ and $|V_2|=n$. Here, we remark that cycles of even lengths are examples of bipartite graphs.

A subset $M\subseteq E(G)$ is called a \textit{matching} of $G$ if for all distinct $e,e'\in M$, $e\cap e'=\emptyset$. A matching of $G$ is called {\it maximal} if it is not properly contained in any other matching of $G$. The maximum size of a matching in $G$ is called the \textit{matching number} of $G$, and it is denoted by $a_G$. 

The \textit{chromatic number} of a graph $G$, denoted $\chi(G)$, is the minimum number of colors required to color the vertices of $G$ in such a way that adjacent vertices receive different colors. 

\subsection{Algebraic preliminaries}\label{algebra}
    
Let $\mathbb K$ be a field and let $R=\mathbb K[x_1,\ldots,x_n]$ be the polynomial ring in $n$ variables endowed with the usual $\mathbb N$-grading. That is, if $m=\prod_{i=1}^nx_i^{\alpha_i}$ is a monomial of $R$, where $\alpha_i\in\mathbb N \cup \{ 0 \}$, then $\deg(m)=\sum_{i=1}^n\alpha_i$. The monomial $m$ is called a {\it square-free monomial} if $\alpha_i\in\{0,1\}$ for all $1\le i\le n$. An ideal $I$ is said to be a {\it monomial ideal} if $I$ is generated by monomials. We say that $I$ is a {\it square-free monomial ideal} if $I$ is generated by square-free monomials. For a monomial ideal $I$, the unique set of minimal generators of $I$ is denoted by $\G(I)$ \cite{RHV}.

Now let $I$ be a monomial ideal of $R$. Then, $R/I$ is a graded $R$-module. The {\it graded minimal free resolution} of $R/I$ is the long exact sequence 
\[
\mathcal F_{\cdot}: \,\, 0\rightarrow F_p\xrightarrow{\partial_{p}}\cdots\xrightarrow{\partial_{2}} F_1\xrightarrow{\partial_1} F_0\xrightarrow{\eta} R/I\rightarrow 0, 
\]
where $F_0=R$, $F_i=\bigoplus_{j \in \mathbb{N}}
R(-j)^{\beta_{i,j}(R/I)}$ for $i\ge 1$, and $\eta$ is the quotient map. Here
 $R(-j)$ denotes the polynomial ring $\mathbb K[x_1,\ldots,x_n]$ with the grading
twisted by $j$, and $p \leq n$ (by Hilbert's syzygy theorem).  The numbers $\beta_{i,j}(R/I)$ are called the {\it $(i,j)^{\text{th}}$ graded Betti numbers of $R/I$}. They play a crucial role in characterizing the structure of the module $R/I$. See \cite{RHV} for more on graded free resolutions.

 The {\it projective dimension} of $R/I$, denoted by $\pd(R/I)$, is the number $\max\{i: \beta_{i,j}(R/I)\neq 0 \text{ for some } j \}$. The invariant $\pd(R/I)$ plays an important role in analyzing the structure of the minimal free resolution of $R/I$. Another important numerical invariant of the graded module $R/I$ is its {\it Castelnuovo-Mumford regularity}, denoted by $\reg(R/I)$, and is defined as follows:
    \[
    \reg(R/I) = \max\{ j-i : \beta_{i,j}(R/I) \neq 0\}.
    \]

    \subsection{Closed neighborhood ideal}\label{known results}
    We now formally introduce the closed neighborhood ideal of a graph as defined by Sharifan and Moradi in \cite{SM}.  Let $G=(V(G),E(G))$ be a finite simple graph with $V(G)=\{x_1,\ldots,x_n\}$. For $w\in V(G)$, define the monomial $m_{(G,w)}$ associated with the closed neighborhood of $w$ as $m_{(G,w)}=\prod_{x_{i}\in N_G[w]}x_{i}$ in the polynomial ring $\K[x_1,\ldots,x_n]$. Given a graph $G$ on the vertex set $\{x_1,\ldots, x_n\}$, we sometimes denote the polynomial ring $\mathbb K[x_1,\ldots, x_n]$ as $R_G$. Now the closed neighborhood ideal $NI(G)$ of $G$ is a square-free monomial ideal in $R_G$ defined as follows. 

    \begin{definition}\label{definition}
        If $G$ is a finite simple graph, then the closed neighborhood ideal of $G$ is defined as 
   \[
    NI(G)=\langle \{ m_{(G,w)}: w\in V(G) \} \rangle.
    \]
    \end{definition}
    
Our aim in this article is to determine the numbers $\reg(R_G/NI(G))$ and $\pd(R_G/NI(G))$ for various families of $G$, and we do so by relating it to the graph theoretical invariant, matching number $a_G$ of $G$. A quick observation is that not every monomial $m_{(G,w)}$ is in the minimal generating set of $NI(G)$. For example, if $u$ and $v$ are two vertices of $G$ such that $N_G[u]\subsetneq N_G[v]$, then $m_{(G,v)}$ is not among the minimal generators of $NI(G)$. Building on this observation we note that if $G$ is any graph and $G'$ is obtained from $G$ by attaching a whisker at every vertex of $G$, then $\reg(R_{G'}/NI(G'))= a_{G'}$ and $\pd(R_{G'}/NI(G'))= a_{G'}$. To prove this result, we first need the following lemma related to the regularity and projective dimension of the sum of two ideals.

    \begin{lemma}\label{reg sum}\cite[cf. Lemma 2.4]{ZXWT}
        Let $I_1\subseteq R_1=\mathbb K[x_1,\ldots,x_n]$ and $I_2\subseteq R_2=\mathbb K[y_1,\ldots, y_m]$ be two homogeneous ideals. Consider the ideal $I=I_1R+I_2R\subseteq R=\mathbb K[x_1,\ldots,x_n,y_1,\ldots,y_m]$. Then, 
        \[
        \reg(R/I)=\reg(R_1/I_1)+\reg(R_2/I_2),\,\,\text{and}\,\,\pd(R/I)=\pd(R_1/I_1)+\pd(R_2/I_2).
        \]
    \end{lemma}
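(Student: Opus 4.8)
The plan is to reduce the statement to the well-known behavior of regularity and projective dimension under taking tensor products of complexes over disjoint sets of variables. Write $R = R_1 \otimes_{\mathbb K} R_2$, where $R_1 = \mathbb K[x_1,\dots,x_n]$ and $R_2 = \mathbb K[y_1,\dots,y_m]$, and observe that since $I_1 R$ involves only the $x$-variables and $I_2 R$ only the $y$-variables, we have a canonical isomorphism of graded $R$-modules
\[
R/I \;=\; R/(I_1R + I_2R) \;\cong\; (R_1/I_1) \otimes_{\mathbb K} (R_2/I_2).
\]
First I would take a graded minimal free resolution $\mathcal F_\bullet \to R_1/I_1$ of $R_1/I_1$ over $R_1$ and a graded minimal free resolution $\mathcal G_\bullet \to R_2/I_2$ of $R_2/I_2$ over $R_2$. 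Extending scalars, $\mathcal F_\bullet \otimes_{R_1} R$ and $\mathcal G_\bullet \otimes_{R_2} R$ are complexes of free $R$-modules, and because the two variable sets are disjoint (so $R$ is flat over each $R_i$ and there is no Tor obstruction), the tensor product complex $(\mathcal F_\bullet \otimes_{R_1} R) \otimes_R (\mathcal G_\bullet \otimes_{R_2} R)$ is a graded free resolution of $(R_1/I_1)\otimes_{\mathbb K}(R_2/I_2) \cong R/I$ over $R$.

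Next I would check that this tensor product resolution is in fact minimal: its differential is built from the differentials of $\mathcal F_\bullet$ and $\mathcal G_\bullet$ via the Koszul sign rule, and since both of those have entries in the respective homogeneous maximal ideals (minimality of $\mathcal F_\bullet$ and $\mathcal G_\bullet$), every entry of the tensor differential lies in the homogeneous maximal ideal of $R$. Hence the minimal graded free resolution of $R/I$ has
\[
\beta_{k,\ell}(R/I) \;=\; \sum_{i+i'=k}\ \sum_{j+j'=\ell} \beta_{i,j}(R_1/I_1)\,\beta_{i',j'}(R_2/I_2).
\]
From this Betti-number identity both formulas follow immediately: the top nonvanishing homological degree is $\pd(R_1/I_1)+\pd(R_2/I_2)$, giving the projective dimension statement; and $\max\{\ell-k : \beta_{k,\ell}(R/I)\neq 0\} = \max\{j-i\} + \max\{j'-i'\}$ over the nonvanishing Betti numbers of the two factors, which is exactly $\reg(R_1/I_1)+\reg(R_2/I_2)$, giving the regularity statement.

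The main point requiring care — though it is standard — is the minimality of the tensor product resolution, i.e.\ verifying that no cancellation occurs when one forms the total complex; this is where the hypothesis that the variable sets are disjoint is essential, since it guarantees $\mathrm{Tor}^R_{>0}(R_1/I_1, R_2/I_2) = 0$ and that the Koszul signs do not interfere with the degree bookkeeping. Since this is precisely the content of \cite[Lemma 2.4]{ZXWT} (and variants appear throughout the literature, e.g.\ for regularity of sums of ideals in disjoint variables), I would simply cite that reference rather than reproduce the argument in full.
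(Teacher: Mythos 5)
Your proposal is correct and follows essentially the same route as the paper: both take the tensor product of the minimal free resolutions of $R_1/I_1$ and $R_2/I_2$ over the disjoint variable sets, derive the product formula for the graded Betti numbers of $R/I$, and read off both the regularity and the projective dimension from it. The only difference is that you spell out the minimality and acyclicity of the tensor product complex, which the paper's proof takes for granted.
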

\begin{proof}
        Let $\mathcal F_{\cdot}$ and $\mathcal G_{\cdot}$ be minimal free resolutions of $R_1/I_1$ and $R_2/I_2$, respectively. Then, the minimal free resolution of $R/I$ is the tensor product of $\mathcal F_{\cdot}$ and $\mathcal G_{\cdot}$. Thus we have, for all $i,j$, 
        \[
        \beta_{i,j}(R/I)=\sum_{\underset{q+q'=j}{p+p'=i}}\beta_{p,q}(R_1/I_1)\beta_{p',q'}(R_2/I_2).
        \]
        Hence, $\reg(R/I)=\reg(R_1/I_1)+\reg(R_2/I_2)$ and $\pd(R/I)=\pd(R_1/I_1)+\pd(R_2/I_2)$.
    \end{proof}
We are now ready to give proof of the above-discussed result.
\begin{proposition}\label{whisker}
    Let $G$ be any simple graph and suppose $G'$ is obtained from $G$ by attaching a whisker at every vertex of $G$. Then, $\reg(R_{G'}/NI(G'))= a_{G'}$ and $\pd(R_{G'}/NI(G'))=a_{G'}$.
\end{proposition}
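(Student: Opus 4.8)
The plan is to compute $NI(G')$ explicitly and recognize it as a complete intersection. Write $V(G)=\{x_1,\dots,x_n\}$ and let $y_i$ be the leaf attached to $x_i$, so that $V(G')=\{x_1,\dots,x_n,y_1,\dots,y_n\}$ and $R_{G'}=\mathbb K[x_1,\dots,x_n,y_1,\dots,y_n]$. For each $i$ we have $N_{G'}[y_i]=\{x_i,y_i\}$, so $m_{(G',y_i)}=x_iy_i$; and $N_{G'}[x_i]=N_G[x_i]\cup\{y_i\}$, so $m_{(G',x_i)}=y_i\cdot m_{(G,x_i)}$. Since $x_i\in N_G[x_i]$, the monomial $m_{(G',x_i)}$ is divisible by $x_iy_i=m_{(G',y_i)}$, hence is not a minimal generator of $NI(G')$. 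As the monomials $x_1y_1,\dots,x_ny_n$ lie in pairwise disjoint pairs of variables, none divides another, and therefore $NI(G')=\langle x_1y_1,\dots,x_ny_n\rangle$ with $\G(NI(G'))=\{x_1y_1,\dots,x_ny_n\}$.

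Next I would read off the homological invariants. Setting $I_1=\langle x_1y_1\rangle\subseteq\mathbb K[x_1,y_1]$ and $I_2=\langle x_2y_2,\dots,x_ny_n\rangle\subseteq\mathbb K[x_2,\dots,x_n,y_2,\dots,y_n]$, we have $NI(G')=I_1R_{G'}+I_2R_{G'}$, so by \Cref{reg sum} and induction on $n$, $\reg(R_{G'}/NI(G'))=\sum_{i=1}^n\reg(\mathbb K[x_i,y_i]/\langle x_iy_i\rangle)$ and $\pd(R_{G'}/NI(G'))=\sum_{i=1}^n\pd(\mathbb K[x_i,y_i]/\langle x_iy_i\rangle)$. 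Since $\mathbb K[x,y]/\langle xy\rangle$ has minimal free resolution $0\to R(-2)\to R\to \mathbb K[x,y]/\langle xy\rangle\to 0$, both its regularity and projective dimension equal $1$; hence $\reg(R_{G'}/NI(G'))=\pd(R_{G'}/NI(G'))=n$.

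Finally, the combinatorial side is immediate: the whisker edges $\{\{x_i,y_i\}:1\le i\le n\}$ form a matching of $G'$ of size $n$ covering all $2n$ vertices, hence a perfect matching, and no matching of a $2n$-vertex graph can be larger, so $a_{G'}=n$. Combining the two computations gives $\reg(R_{G'}/NI(G'))=\pd(R_{G'}/NI(G'))=a_{G'}$, as claimed. There is no real obstacle in this argument: the whole content is the elementary observation that whiskering every vertex makes each ``big'' generator $m_{(G',x_i)}$ divisible by the whisker generator $x_iy_i$, collapsing $NI(G')$ to a complete intersection, after which \Cref{reg sum} and the resolution of $\mathbb K[x,y]/\langle xy\rangle$ do the rest.
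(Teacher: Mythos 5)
Your proposal is correct and follows essentially the same route as the paper: identify $NI(G')$ as the complete intersection $\langle x_1y_1,\ldots,x_ny_n\rangle$ because each $m_{(G',x_i)}$ is divisible by the whisker generator $x_iy_i$, then apply \Cref{reg sum} together with the length-one resolution of $\mathbb K[x,y]/\langle xy\rangle$ to get $\reg=\pd=n$. The one place you diverge is the computation of $a_{G'}$: the paper runs a somewhat laborious induction showing that the whisker matching is maximum, whereas you simply observe that it is a perfect matching on $2n$ vertices and no matching can exceed $n$ edges --- a cleaner and entirely adequate argument.
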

\begin{proof}
    Suppose $V(G)=\{x_1,\ldots,x_n\}$ and $V(G')=\{x_1,\ldots,x_n,y_1,\ldots,y_n\}$. Thus $\{x_i,y_i\}$ are the whiskers added to $G$. Then, for each $1\le i\le n$, $N_{G'}[y_i]\subseteq N_{G'}[x_i]$. Therefore, the closed neighborhood ideal
\[
    NI(G')=\langle \{ m_{(G',y_i)}: 1\le i\le n \} \rangle.
\]
Here $m_{(G',y_i)}=x_iy_i$. Thus $NI(G')=\langle \{ x_iy_i: 1\le i\le n 
\} \rangle$. If $G_i$ denotes the induced subgraph of $G'$ on the vertex set $\{x_i,y_i\}$, then $NI(G')=\sum_{i=1}^n NI(G_i)$. By \Cref{reg sum}, $\reg(R_{G'}/NI(G'))=\sum_{i=1}^n\reg(R_{G_i}/NI(G_i))$ and $\pd(R_{G'}/NI(G'))=\sum_{i=1}^n\pd(R_{G_i}/NI(G_i))$. Now
\[
    0\rightarrow R_{G_i}(-2)\xrightarrow{\mu_i}R_{G_i}\xrightarrow{\eta_i}R_{G_i}/NI(G_i)\rightarrow 0
    \]
    is a graded minimal free resolution of $R_{G_i}/NI(G_i)$, where $\mu_i$ is the multiplication map by $x_iy_i$ and $\eta_i$ is the quotient map. Therefore, $\reg(R_{G_i}/NI(G_i))=\pd(R_{G_i}/NI(G_i))=1$ for each $1\le i\le n$. Hence, $\reg(R_{G'}/NI(G'))=\pd(R_{G'}/NI(G'))=n$.

    Now our aim is to show that $a_{G'}=n$. Let $M_{G'}=\{\{x_i,y_i\}: 1\le i\le n\}$. Then, $M_{G'}$ is a maximal matching of $G'$. We proceed to show that $|M_{G'}|=a_{G'}$. The proof is by induction on $n$. If $n\le 2$, then it is easy to see that $|M_{G'}|=a_{G'}$. Now suppose $n\ge 3$. Let $M$ be a matching of $G'$ such that $\{x_i,x_j\}\in M$ for some $1\le i<j\le n$. Then, for each $w\in N_{G'}(x_i)$ and $z\in N_{G'}(x_j)$, $\{w,x_i\}\notin M$ and $\{z,x_j\}\notin M$. Define $\widehat G=G\setminus\{x_i,x_j\}$ and construct a new graph $\widetilde G$ by attaching a whisker at every vertex of  $\widehat G$. In other words, $\widetilde G=G'\setminus\{x_i,y_i,x_j,y_j\}$. Now by induction hypothesis, $|M_{\widetilde G}|=a_{\widetilde G}$, where $M_{\widetilde G}=\{\{x_t,y_t\}: 1\le t\le n, t\neq i\text{ and }t\neq j\}$. Since $M\setminus \{\{x_i,x_j\}\}$ is a matching of $\widetilde G$ , $|M_{\widetilde G}|\ge |M|-1$. Observe that $|M_{\widetilde G}|=|M_{G'}|-2$. Hence, $|M_{G'}|\ge |M|$, and consequently,  $a_{G'}=|M_{G'}|=n$. Therefore, $\reg(R_{G'}/NI(G'))= a_{G'}$ and $\pd(R_{G'}/NI(G'))= a_{G'}$.
\end{proof}
We need the following lemma in \Cref{section 3}.

\begin{lemma}\label{extra variable}\cite[cf. Lemma 2.5]{ZXWT}
    Let $I\subseteq R=\mathbb K[x_1,\ldots,x_n]$ be a homogeneous ideal and let $I'=x_{n+1}I\subseteq R'=\mathbb K[x_1,\ldots,x_n,x_{n+1}]$. Then, $\beta_{i,j}(R/I)=\beta_{i,j+1}(R'/I')$ for $i\ge 1$. In particular, $\reg(R'/I')=\reg(R/I)+1$ and $\pd(R'/I')=\pd(R/I)$.
\end{lemma}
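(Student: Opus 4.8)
The plan is to build a minimal graded free resolution of $R'/I'$ directly out of one for $R/I$ and then compare graded Betti numbers term by term; we assume throughout that $I$ is a nonzero proper ideal (this is implicit in the statement). Let
\[
\mathcal F_{\cdot}:\quad 0\to F_p\xrightarrow{\partial_p}\cdots\xrightarrow{\partial_2}F_1\xrightarrow{\partial_1}F_0=R\to R/I\to 0
\]
be the minimal graded free resolution of $R/I$, with $F_i=\bigoplus_{j}R(-j)^{\beta_{i,j}(R/I)}$. Since $R'=R[x_{n+1}]$ is a free, hence flat, $R$-module, applying $-\otimes_R R'$ produces a graded free resolution $\mathcal F_{\cdot}\otimes_R R'$ of $(R/I)\otimes_R R'=R'/IR'$, and it remains minimal because the entries of the differentials $\partial_i\otimes 1$ lie in $(x_1,\dots,x_n)\subseteq(x_1,\dots,x_n,x_{n+1})$.

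Next I would modify this resolution: shift the internal degree of every free module in homological degree $\ge 1$ down by $1$, and multiply the first differential by $x_{n+1}$, obtaining
\[
\mathcal C_{\cdot}:\quad 0\to F_p'(-1)\xrightarrow{\partial_p\otimes 1}\cdots\xrightarrow{\partial_2\otimes 1}F_1'(-1)\xrightarrow{x_{n+1}(\partial_1\otimes 1)}F_0'=R'\to R'/I'\to 0,
\]
where $F_i'=F_i\otimes_R R'$. The image of the first differential is $x_{n+1}\cdot IR'=I'$, so $\mathcal C_{\cdot}$ has cokernel $R'/I'$. In homological degrees $\ge 2$ the complex $\mathcal C_{\cdot}$ agrees, up to the internal degree shift, with $\mathcal F_{\cdot}\otimes_R R'$, so it is exact there. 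At homological degree $1$, since $x_{n+1}$ is a nonzerodivisor on the free module $R'$, one has $\ker\big(x_{n+1}(\partial_1\otimes 1)\big)=\ker(\partial_1\otimes 1)=\operatorname{im}(\partial_2\otimes 1)$, which is exactly the image of $F_2'(-1)\to F_1'(-1)$. Thus $\mathcal C_{\cdot}$ is a graded free resolution of $R'/I'$, and it is minimal: for $i\ge 2$ the entries of $\partial_i\otimes 1$ still lie in $(x_1,\dots,x_n)$, while each entry of the first differential is divisible by $x_{n+1}$, hence lies in the maximal ideal of $R'$.

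Reading the Betti numbers off the minimal resolution $\mathcal C_{\cdot}$, we find $F_i'(-1)=\bigoplus_j R'(-j-1)^{\beta_{i,j}(R/I)}$ for $i\ge 1$, that is, $\beta_{i,j+1}(R'/I')=\beta_{i,j}(R/I)$ for every $i\ge 1$, which is the asserted equality. For the two consequences, note that the nonzero graded Betti numbers of $R'/I'$ in homological degree $\ge 1$ are precisely the $\beta_{i,j+1}(R'/I')$ with $\beta_{i,j}(R/I)\ne 0$; such a pair $(i,j+1)$ contributes $(j+1)-i=(j-i)+1$ to the regularity and $i$ to the projective dimension, while the only remaining nonzero Betti number $\beta_{0,0}(R'/I')=1$ contributes $0$ to both. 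Taking maxima, and using $\reg(R/I)\ge 0$ and $\pd(R/I)\ge 1$ (valid since $I$ is nonzero and proper), gives $\reg(R'/I')=\reg(R/I)+1$ and $\pd(R'/I')=\pd(R/I)$.

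I do not foresee a genuine obstacle here: the only points that need care are propagating the internal degree shift consistently through the entire tail of the resolution, and the exactness check at homological degree $1$, which rests solely on $x_{n+1}$ being a nonzerodivisor on $R'$; everything else is formal from flat base change and the definitions of $\reg$ and $\pd$.
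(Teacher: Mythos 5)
Your argument is correct. Note that the paper itself gives no proof of this lemma --- it is quoted from the reference [Zhu--Xu--Wang--Tang, Lemma 2.5] --- so there is nothing to compare against internally; your construction (flat base change along $R\hookrightarrow R'$, then twisting the tail of the resolution by $(-1)$ and premultiplying the first differential by $x_{n+1}$, which preserves exactness and minimality because $x_{n+1}$ is a nonzerodivisor on $R'$) is the standard proof and is essentially equivalent to observing that $I'\cong (IR')(-1)$ as graded $R'$-modules. One cosmetic point: in the last step, to get $\reg(R'/I')=\reg(R/I)+1$ you need the maximum of $j-i$ over the Betti numbers with $i\ge 1$ to be nonnegative (so that the shifted maximum still dominates the contribution of $\beta_{0,0}$); this follows not from ``$\reg(R/I)\ge 0$'' (which is automatic and says nothing about the $i\ge 1$ part) but from the fact that a proper nonzero homogeneous ideal has a minimal generator of degree at least $1$, whence $\beta_{1,j}(R/I)\neq 0$ for some $j\ge 1$.
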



An ideal related to the closed neighborhood ideal of a graph is the {\it edge ideal}. The edge ideal $I(G)$ of a graph $G$ is the ideal generated by the monomials $x_ix_j$ corresponding to all the edges $\{x_i,x_j\}$ of $G$. In 1988 Fr\"oberg \cite{RF} gave an algebraic characterization of the chordal graphs in the context of edge ideals. More precisely, he proved the following.

\begin{theorem}[Fr\"oberg's theorem]
A graph $G$ is chordal if and only if $\reg(R_{G^c}/I(G^c))=1$.
\label{Froberg}
\end{theorem}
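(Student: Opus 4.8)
The plan is to pass to simplicial topology through the Stanley--Reisner correspondence and then read off the regularity with Hochster's formula. First I would note that $I(G^c)$ is the Stanley--Reisner ideal of the independence complex $\Delta:=\mathrm{Ind}(G^c)$, and that a set of vertices is independent in $G^c$ exactly when it is a clique of $G$; hence $\Delta$ is the clique complex of $G$, $R_{G^c}/I(G^c)=\K[\Delta]$, and for every $W\subseteq V(G)$ the induced subcomplex $\Delta|_W$ is the clique complex of $G[W]$. By Hochster's formula, $\beta_{i,j}(\K[\Delta])=\sum_{|W|=j}\dim_{\K}\widetilde H_{j-i-1}(\Delta|_W;\K)$, so $\reg(\K[\Delta])\le 1$ if and only if $\widetilde H_k(\Delta(G[W]);\K)=0$ for every $W\subseteq V(G)$ and every $k\ge 1$. (The degenerate case $G$ complete gives $I(G^c)=0$ and $\reg=0$, so the statement is read either under the hypothesis that $G$ is non-complete or with ``$=1$'' replaced by ``$\le 1$''; for non-complete $G$ the ideal $I(G^c)$ is nonzero and proper, so $\reg\ge 1$ automatically and the content is the upper bound.)

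For ``$G$ chordal $\Rightarrow\reg\le 1$'': since chordality is inherited by induced subgraphs, it suffices to show that the clique complex $\Delta(H)$ of an arbitrary chordal graph $H$ has $\widetilde H_k=0$ for all $k\ge 1$. I would induct on $|V(H)|$. If $H$ is disconnected, apply the claim to each connected component; so assume $H$ connected with at least two vertices. By Dirac's theorem $H$ has a simplicial vertex $v$, so $N_H(v)$ is a clique; then $H\setminus v$ is chordal and still connected, because any walk of $H$ passing through $v$ enters and leaves $v$ through two vertices of $N_H(v)$ (or backtracks), and these are joined by an edge, so the walk can be rerouted inside $H\setminus v$. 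Now $\Delta(H)=\Delta(H\setminus v)\cup\Delta(N_H[v])$, where $\Delta(N_H[v])$ is a full simplex on the clique $N_H[v]$, and $\Delta(H\setminus v)\cap\Delta(N_H[v])=\Delta(N_H(v))$ is a full simplex on $N_H(v)$ (nonempty, as $v$ is not isolated). Feeding this into the reduced Mayer--Vietoris sequence and using that nonempty simplices have vanishing reduced homology in all degrees $\ge 0$, together with the inductive hypothesis for $H\setminus v$, yields $\widetilde H_k(\Delta(H))=0$ for all $k\ge 1$ (in fact $\Delta(H)$ is contractible when $H$ is connected).

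For the converse I prove the contrapositive: if $G$ is not chordal then $\reg(R_{G^c}/I(G^c))\ge 2$. Such a $G$ has an induced cycle $C$ of length $k\ge 4$; since $C_k$ has no triangle for $k\ge 4$, the induced subcomplex $\Delta|_{V(C)}=\Delta(C_k)$ is the cycle $C_k$ itself as a space, homeomorphic to $S^1$, so $\widetilde H_1(\Delta|_{V(C)};\K)=\K\ne 0$. Taking $W=V(C)$, $j=k$, $i=k-2$ in Hochster's formula gives $\beta_{k-2,k}(\K[\Delta])\ne 0$, whence $\reg\ge 2$. Together with the first part this proves the theorem. I expect the homological triviality of clique complexes of chordal graphs to be the one real step; the subtle points there are that removing a simplicial vertex preserves connectivity (needed for the induction to close) and that $N_H(v)$ can be empty only when $H$ is disconnected or a single vertex, which is precisely why one first reduces to connected $H$ with at least two vertices.
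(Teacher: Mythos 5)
The paper does not prove this statement at all: it is quoted as Fr\"oberg's theorem with a citation to \cite{RF}, so there is no in-paper argument to compare against. Your proposal is a correct, self-contained proof along the standard modern lines (Stanley--Reisner correspondence, Hochster's formula, and the homological triviality of clique complexes of chordal graphs), and all the key steps check out: the identification of the Stanley--Reisner complex of $I(G^c)$ with the clique complex $\Delta(G)$ and of its induced subcomplexes with clique complexes of induced subgraphs; the Mayer--Vietoris argument at a simplicial vertex $v$, where $\Delta(H)=\Delta(H\setminus v)\cup 2^{N_H[v]}$ with intersection the nonempty full simplex $2^{N_H(v)}$; and the converse via an induced $C_k$, $k\ge 4$, whose clique complex is a topological circle, giving $\beta_{k-2,k}\neq 0$ and hence $\reg\ge 2$. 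Your flag about the degenerate case is also a genuine catch: for $G$ complete one has $I(G^c)=0$ and $\reg(R_{G^c}/I(G^c))=0$, so the equality ``$=1$'' in the statement as printed must be read as ``$\le 1$'' or under the tacit assumption that $G$ is not complete (the paper's only application, to $G^c=S_n$, is unaffected since $S_n^c$ is not complete for $n\ge 2$). One small remark: the connectivity of $H\setminus v$ is not actually needed to close the induction if the inductive statement is ``$\widetilde H_k(\Delta(H'))=0$ for all $k\ge 1$'' rather than contractibility --- the Mayer--Vietoris sequence only requires $\widetilde H_k(\Delta(H\setminus v))=0$ for $k\ge 1$ and $\widetilde H_{k-1}(2^{N_H(v)})=0$, the latter holding because $N_H(v)\neq\emptyset$ once $H$ is connected with at least two vertices --- but including it does no harm.
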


Note that if $S_n$ is a star graph on $n$ vertices, then $NI(S_n)=I(S_n)$. Also, it is easy to see that $a_{S_n}=1$. Now since $S_n^c$ is a chordal graph, by Fr\"oberg's theorem, $\reg(R_{S_n}/NI(S_n))=\reg(R_{S_n}/I(S_n))=1$. Therefore, we have,
\begin{corollary}\label{star}
    If $S_n$ denotes the star graph on $n$ vertices, then $\reg(R_{S_n}/NI(S_n))=a_{S_n} = 1$.    
\end{corollary}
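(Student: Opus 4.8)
The statement to prove is \Cref{star}: if $S_n$ is the star graph on $n$ vertices, then $\reg(R_{S_n}/NI(S_n)) = a_{S_n} = 1$.

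The plan is essentially to observe that everything needed has already been assembled in the paragraph immediately preceding the corollary, so the proof is a short chain of identifications. First I would note that for the star graph $S_n$, the unique vertex $c$ of degree $n-1$ is adjacent to every other vertex, so its closed neighborhood $N_{S_n}[c]$ is the whole vertex set, while for each leaf $\ell$ the closed neighborhood $N_{S_n}[\ell] = \{\ell, c\}$ is contained in $N_{S_n}[c]$. Hence the monomial $m_{(S_n,c)}$ is a multiple of each $m_{(S_n,\ell)} = x_\ell x_c$ and is therefore not a minimal generator; this shows $NI(S_n) = \langle x_\ell x_c : \ell \text{ a leaf}\rangle = I(S_n)$, the edge ideal of $S_n$.

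Next I would compute the matching number: any two distinct edges of $S_n$ share the center vertex $c$, so a matching can contain at most one edge, and since $S_n$ has at least one edge (for $n \ge 2$), we get $a_{S_n} = 1$.

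Finally, for the regularity, I would invoke Fr\"oberg's theorem (\Cref{Froberg}): the complement $S_n^c$ consists of a complete graph $K_{n-1}$ on the leaves together with an isolated vertex $c$, which is chordal, so $\reg(R_{S_n^c}/I(S_n^c)) = 1$; applying Fr\"oberg's theorem in the direction ``$G$ chordal $\Rightarrow \reg(R_{G^c}/I(G^c)) = 1$'' with $G = S_n$ gives $\reg(R_{S_n}/I(S_n)) = 1$. Combining this with $NI(S_n) = I(S_n)$ yields $\reg(R_{S_n}/NI(S_n)) = 1 = a_{S_n}$. There is no real obstacle here; the only thing to be slightly careful about is the degenerate small cases ($n = 1$, or conventions on $S_2$), but for $n \ge 2$ the argument above is complete, and the statement is really just a sanity-check corollary illustrating that the target equality $\reg = a_G$ holds for this basic family.
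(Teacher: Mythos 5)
Your proof is correct and follows essentially the same route as the paper: identify $NI(S_n)$ with the edge ideal $I(S_n)$, note $a_{S_n}=1$, and apply Fr\"oberg's theorem via the chordality of $S_n^c$. The only blemish is a bookkeeping slip in the last step: to conclude $\reg(R_{S_n}/I(S_n))=1$ you must apply \Cref{Froberg} with $G=S_n^c$ (so that $G^c=S_n$), not with $G=S_n$; since you have already verified that $S_n^c$ (a complete graph on the leaves plus an isolated vertex) is chordal, this is immediate.
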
 

In \Cref{section 3} we show that $\pd(R_{S_n}/NI(S_n))\ge a_{S_n}$. Note that $S_3$ is the path graph on $3$ vertices. We show in the next section that $\pd(R_{S_3}/NI(S_3))> a_{S_3}$.

\section{Regularity and projective dimension of closed neighborhood ideals}\label{section 3}


In \cite[Theorem 2.5]{SM} Sharifan and Moradi proved that if $G$ is a forest, then $\reg(R_G/NI(G))\ge a_G$. Moreover, they conjectured \cite[Conjecture 2.11]{SM} that, for a forest $G$, $\reg(R_G/NI(G))=a_G$. As one of the main results of this section, we prove this conjecture.

We first recall the following result which will be used to compute the regularity of the closed neighborhood ideals of various families of graphs.

\begin{lemma}\textup{\cite[cf. Lemma 2.10]{DHS}}
    Let $I\subseteq R =\K[x_1,\ldots,x_n]$ be a square-free monomial ideal and let $x_i$ be a variable appearing in some generator of $I$. Then,
    \begin{enumerate}
        \item[(i)] $\reg(R/\langle I,x_i \rangle\le \reg(R/I)$,
        \item[(ii)] $\reg(R/I)\le\max\{\reg(R/(I:x_i))+1,\reg(R/\langle I,x_i\rangle)\}$.
    \end{enumerate}
    
    \label{lemma1}
    \end{lemma}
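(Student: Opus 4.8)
The plan is to extract (ii) from the standard ``variable-splitting'' short exact sequence and to obtain (i) by comparing graded Betti numbers via Hochster's formula.

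I would start from the short exact sequence of graded $R$-modules
\[
0 \longrightarrow \bigl(R/(I:x_i)\bigr)(-1) \xrightarrow{\,\cdot x_i\,} R/I \longrightarrow R/\langle I, x_i\rangle \longrightarrow 0,
\]
where multiplication by $x_i$ is injective modulo $(I:x_i)$ by the definition of the colon ideal and its cokernel is $R/(I+x_iR)=R/\langle I,x_i\rangle$. Part (ii) is then immediate from the elementary fact that a short exact sequence $0\to A\to B\to C\to 0$ of graded modules satisfies $\reg(B)\le\max\{\reg(A),\reg(C)\}$: with $A=(R/(I:x_i))(-1)$, so that $\reg(A)=\reg(R/(I:x_i))+1$, together with $B=R/I$ and $C=R/\langle I,x_i\rangle$, this reads $\reg(R/I)\le\max\{\reg(R/(I:x_i))+1,\ \reg(R/\langle I,x_i\rangle)\}$.

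For (i), note that the sequence alone only gives the weaker bound $\reg(R/\langle I,x_i\rangle)\le\max\{\reg(R/(I:x_i)),\reg(R/I)\}$, so I would argue combinatorially instead. Since every minimal generator of $I$ divisible by $x_i$ is redundant modulo $x_i$, the ring $R/\langle I,x_i\rangle$ is isomorphic---after discarding the variable $x_i$, which leaves the regularity unchanged, the minimal free resolution merely being tensored with the Koszul complex of $x_i$---to the Stanley--Reisner ring of the induced subcomplex $\Delta':=\Delta[V\setminus\{x_i\}]$, where $\Delta$ is the Stanley--Reisner complex of $I$. The induced subcomplexes of $\Delta'$ are precisely the induced subcomplexes $\Delta[W]$ of $\Delta$ with $x_i\notin W$, so Hochster's formula yields, for all $k$ and $j$,
\[
\beta_{k,j}\bigl(\K[\Delta']\bigr)=\sum_{\substack{|W|=j\\ x_i\notin W}}\dim_{\K}\widetilde H_{\,j-k-1}(\Delta[W];\K)\ \le\ \beta_{k,j}(R/I),
\]
and therefore $\reg(R/\langle I,x_i\rangle)=\reg(\K[\Delta'])\le\reg(R/I)$.

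The one genuinely non-formal point is this last reduction in (i): recognizing $R/\langle I,x_i\rangle$ as the Stanley--Reisner ring of an induced subcomplex of $\Delta$ and then invoking Hochster's formula to see that passing to an induced subcomplex cannot increase any graded Betti number. Everything else---the short exact sequence, its exactness, and the regularity inequalities for short exact sequences---is routine bookkeeping. (Alternatively, one may simply cite \cite{DHS}, where the statement is proved along these lines.)
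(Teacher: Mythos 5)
The paper does not prove this lemma at all: it is imported verbatim from Dao--Huneke--Schweig \cite[Lemma 2.10]{DHS}, so there is no in-paper proof to compare against. Your argument is correct and is essentially the standard one (and the one used in \cite{DHS}): part (ii) follows from the short exact sequence $0 \to (R/(I:x_i))(-1) \xrightarrow{\cdot x_i} R/I \to R/\langle I,x_i\rangle \to 0$ together with $\reg(B)\le\max\{\reg(A),\reg(C)\}$ and the shift $\reg(A(-1))=\reg(A)+1$; and you are right that the same sequence only yields $\reg(R/\langle I,x_i\rangle)\le\max\{\reg(R/(I:x_i)),\reg(R/I)\}$, so (i) genuinely needs the square-free hypothesis. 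Your route to (i) is sound: after discarding the generators divisible by $x_i$ and the now-superfluous variable $x_i$ (which changes $\beta_{k,j}$ to $\beta_{k,j}+\beta_{k-1,j-1}$ and hence preserves $j-k$ and the regularity), $R/\langle I,x_i\rangle$ is the Stanley--Reisner ring of the induced subcomplex $\Delta[V\setminus\{x_i\}]$, whose minimal nonfaces are exactly the minimal generators of $I$ not divisible by $x_i$; Hochster's formula then exhibits each of its graded Betti numbers as a subsum of the corresponding sum for $R/I$, giving the termwise inequality and hence (i). The only cosmetic caveat is that the two sides of your displayed Betti-number inequality are computed over different polynomial rings, but your Koszul-complex remark already disposes of that.
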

    
     Note that the connected components of a forest are trees. The following lemma tells us that in order to find a relationship between $\reg(R_G/NI(G))$ and $a_G$ when $G$ is a forest, it is enough to take $G$ to be a tree.

        \begin{lemma}\label{component lemma}
        Let $G$ be a finite simple graph with connected components $\mathcal{C}_1,\ldots,\mathcal{C}_k$. If $\mathrm{reg}(R_{\mathcal{C}_i}/NI(\mathcal C_i))\ge a_{\mathcal{C}_i}$ for each $i=1,2,\ldots,k$, then $\reg(R_G/NI(G))\ge a_G$. Similarly, if $\mathrm{reg}(R_{\mathcal{C}_i}/NI(\mathcal C_i))\le a_{\mathcal{C}_i}$ for each $i=1,2,\ldots,k$, then $\reg(R_G/NI(G))\le a_G$.
    \end{lemma}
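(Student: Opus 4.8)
The plan is to reduce the general statement to the connected case by an iterated use of \Cref{reg sum}, combined with the two bounds in \Cref{lemma1} applied to a well-chosen variable. First I would observe that if $G$ has connected components $\mathcal{C}_1,\ldots,\mathcal{C}_k$, then the vertex sets are pairwise disjoint, so the polynomial ring $R_G$ is the tensor product $R_{\mathcal{C}_1}\otimes_{\K}\cdots\otimes_{\K}R_{\mathcal{C}_k}$, and since every generator $m_{(G,w)}$ of $NI(G)$ only involves variables from the component containing $w$, we get $NI(G)=\sum_{i=1}^k NI(\mathcal{C}_i)R_G$. Applying \Cref{reg sum} inductively on $k$ then yields
\[
\reg(R_G/NI(G))=\sum_{i=1}^k\reg(R_{\mathcal{C}_i}/NI(\mathcal{C}_i)).
\]

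Next I would recall the standard additivity of the matching number over connected components: a matching of $G$ is a disjoint union of matchings of the $\mathcal{C}_i$ (since no edge of $G$ joins two distinct components), so $a_G=\sum_{i=1}^k a_{\mathcal{C}_i}$. With both quantities written as sums over components, the conclusion is immediate: if $\reg(R_{\mathcal{C}_i}/NI(\mathcal{C}_i))\ge a_{\mathcal{C}_i}$ for every $i$, then summing gives $\reg(R_G/NI(G))=\sum_i\reg(R_{\mathcal{C}_i}/NI(\mathcal{C}_i))\ge\sum_i a_{\mathcal{C}_i}=a_G$, and identically with $\ge$ replaced by $\le$ for the second assertion. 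So the two implications fall out of a single additivity computation.

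The only mild subtlety worth checking carefully is the base of the induction and the hypothesis of \Cref{reg sum}, namely that $NI(\mathcal{C}_i)$ really is a homogeneous ideal in the polynomial ring on the variables of $\mathcal{C}_i$ alone — this is clear from \Cref{definition}, since $m_{(\mathcal{C}_i,w)}=\prod_{x\in N_{\mathcal{C}_i}[w]}x$ and the closed neighborhood of $w$ in $G$ equals its closed neighborhood in $\mathcal{C}_i$ when $w\in\mathcal{C}_i$, because no edge leaves the component. I expect no genuine obstacle here; the whole statement is essentially a bookkeeping consequence of \Cref{reg sum} plus the component-wise additivity of $a_G$, and the main (if trivial) point is simply to set up the tensor decomposition of $R_G$ correctly so that \Cref{reg sum} applies verbatim. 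One could optionally remark that, by \Cref{lemma1}, it would also suffice in applications to know the component inequalities only for the graphs arising after deleting closed neighborhoods of leaves, but that refinement is not needed for the statement as phrased.
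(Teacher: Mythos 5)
Your proposal is correct and follows essentially the same route as the paper: both decompose $NI(G)$ as the sum of the ideals $NI(\mathcal{C}_i)$ living in disjoint sets of variables, apply \Cref{reg sum} to obtain $\reg(R_G/NI(G))=\sum_{i=1}^k\reg(R_{\mathcal{C}_i}/NI(\mathcal{C}_i))$, and combine this with the component-wise additivity $a_G=\sum_{i=1}^k a_{\mathcal{C}_i}$ of the matching number. The passing mention of \Cref{lemma1} in your opening sentence plays no role in the argument, as you yourself note at the end.
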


    \begin{proof}
        Note that if $M_i$ is a matching of $\mathcal{C}_i$ such that $a_{\mathcal{C}_i}=|M_i|$ for each $i\in [k]$, then $\sqcup_{i=1}^kM_i$ is a matching of $G$. Hence, $a_{G}\ge \sum_{i=1}^k a_{\mathcal{C}_i}$. Now let $M$ be a matching of $G$ such that $a_G=|M|$. Then, $M\cap E(\mathcal{C}_i)$ is a matching of $\mathcal{C}_i$  for each $i=1,2,\ldots,k$. Hence, $\sum_{i=1}^k a_{\mathcal{C}_i}\ge \sum_{i=1}^k |(M\cap E(\mathcal C_i))|=|M|=a_G$. Thus we have $a_G=\sum_{i=1}^k a_{\mathcal{C}_i}$.

        Now since $\mathcal{C}_i$'s are the connected components of $G$, we have $NI(G)=\sum_{i=1}^kNI(\mathcal{C}_i)$. Therefore, by \Cref{reg sum}, $\mathrm{reg}(R_G/NI(G))=\sum_{i=1}^k\mathrm{reg}(R_{\mathcal{C}_i}/NI(\mathcal C_i))$. Hence, if $\mathrm{reg}(R_{\mathcal{C}_i}/NI(\mathcal C_i))\ge a_{\mathcal{C}_i}$ for each $i\in[k]$, then $\reg(R_G/NI(G))=\sum_{i=1}^k\mathrm{reg}(R_{\mathcal{C}_i}/NI(\mathcal C_i))\ge \sum_{i=1}^k a_{\mathcal{C}_i} = a_G$. Similarly, if $\mathrm{reg}(R_{\mathcal{C}_i}/NI(\mathcal C_i))\le a_{\mathcal{C}_i}$ for each $i\in[k]$, then $\reg(R_G/NI(G))=\sum_{i=1}^k\mathrm{reg}(R_{\mathcal{C}_i}/NI(\mathcal C_i))\le \sum_{i=1}^k a_{\mathcal{C}_i}= a_G$.
    \end{proof}

In the next theorem, we show that for trees, the Castelnuovo-Mumford regularity of the closed neighborhood ideal is the same as its matching number.

 Recall that a tree $T$ is a finite simple graph such that there exists a unique path between
any two distinct vertices. Fix a vertex $z$ of $T$, and we say that $T$ is a rooted tree with $z$ being the root of $T$. Now for any $y\in V(T)$ if $z=w_{i_0},w_{i_1},\ldots,w_{i_{r-1}},w_{i_r}=y$ is the unique path between $z$ and $y$, then we say that $y$ has {\it level} $r$ and denote it by $\mathrm{level}(y)=r$. The level of the root vertex is defined to be zero. We define the height of $T$ to be 
\[
\mathrm{height}(T)=\max_{y\in V(T)}\mathrm{level}(y).
\]
Note that each tree can be realized as a rooted tree by fixing a vertex of the tree as the root. The following example shows a rooted tree with height $3$.
\begin{example}\label{example 1}
\begin{figure}[h]
\centering
\begin{tikzpicture}
[scale=.55]
\draw [fill] (4,6) circle [radius=0.1];
\draw [fill] (3,4) circle [radius=0.1];
\draw [fill] (5,4) circle [radius=0.1];
\draw [fill] (2,2) circle [radius=0.1];
\draw [fill] (4,2) circle [radius=0.1];
\draw [fill] (6,2) circle [radius=0.1];
\draw [fill] (3,0) circle [radius=0.1];
\draw [fill] (4,0) circle [radius=0.1];
\node at (4.45,6.4) {$z$};
\node at (2.3,4) {$y_1$};
\node at (5.6,4) {$y_2$};
\node at (1.5,2) {$y_3$};
\node at (4.5,2) {$y_4$};
\node at (6.5,2) {$y_5$};
\node at (2.4,0) {$y_6$};
\node at (4.6,0) {$y_7$};
\draw (4,6)--(3,4)--(4,2)--(3,0);
\draw (3,4)--(2,2);
\draw (4,2)--(4,0);
\draw (4,6)--(5,4)--(6,2);
\end{tikzpicture}\caption{A rooted tree $T$.}\label{figure 1}
\end{figure}
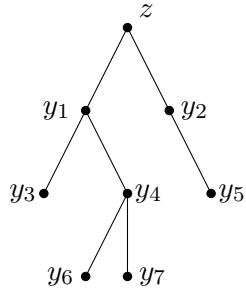
Consider the rooted tree $T$ in \Cref{figure 1} with the vertex $z$ as root. We have $\mathrm{level}(y_i)=1$ for $i=1,2$, $\mathrm{level}(y_i)=2$ for $i=3,4,5$, and $\mathrm{level}(y_i)=3$ for $i=6,7$. Also, $\mathrm{height}(T)=3$. 
\end{example}

Now we are ready to determine the regularity of the closed neighborhood ideal of trees.
    \begin{theorem}\label{tree theorem}
     Let $T$ be a tree. Then,
        \[
        \reg(R_T/NI(T))=a_T.
        \]
    \end{theorem}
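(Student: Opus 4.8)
The plan is to prove both inequalities $\reg(R_T/NI(T)) \ge a_T$ and $\reg(R_T/NI(T)) \le a_T$ separately. For the lower bound, one can invoke \Cref{thm:conj} (which applies to all graphs, hence to trees), so the real work is the upper bound $\reg(R_T/NI(T)) \le a_T$. I would prove this by induction, using the structure of a rooted tree. Root $T$ at a vertex $z$ and induct on $\mathrm{height}(T)$ (with an inner induction or a well-chosen base case for small heights such as $\mathrm{height}(T) \le 1$, where $T$ is a star or an edge and \Cref{star} applies, giving regularity $1 = a_T$, or $T$ is a single vertex with $NI(T) = \langle x \rangle$ and regularity $0 = a_T$).

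For the inductive step, pick a vertex $y$ at maximum level, so $y$ is a leaf; let $u = N_T(y)$ be its (unique) parent. The key move is to apply \Cref{lemma1}(ii) to a carefully chosen variable. The natural candidate is $x_u$ or $x_y$. I expect the cleaner choice to involve splitting on the variable corresponding to the parent $u$ of a deepest leaf, or possibly a ``sibling'' structure: if $u$ has several leaf children $y_1, \dots, y_t$, then $m_{(T,y_i)} = x_u x_{y_i}$ for each $i$, and $m_{(T,u)} = x_u \cdot \prod_{v \in N_T[u]} x_v$ is divisible by $x_u x_{y_1}$, hence not a minimal generator. Using \Cref{lemma1}(ii) with $x_i = x_u$: the ideal $\langle NI(T), x_u \rangle$ corresponds (after setting $x_u = 0$, which kills all generators through $u$) essentially to $NI$ of a smaller forest obtained by deleting $u$ and its leaf children, plus possibly isolated-vertex contributions handled by \Cref{extra variable} and \Cref{component lemma}; one checks its regularity is $\le a_{T'}$ for the appropriate $T'$, and that $a_{T'} \le a_T$. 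For the colon ideal $(NI(T) : x_u)$, the generators $x_u x_{y_i}$ become $x_{y_i}$, which are variables; quotienting $R$ by these variables and tracking the remaining generators should yield $NI$ of the forest $T \setminus N_T[u]$ (with some extra variables), whose regularity is $\le a_{T \setminus N_T[u]}$ by induction on height, and then $a_{T \setminus N_T[u]} + 1 \le a_T$ because $\{u, y_1\}$ extends a maximum matching of $T \setminus N_T[u]$.

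The main obstacle I anticipate is bookkeeping: verifying precisely what $\langle NI(T), x_u \rangle$ and $(NI(T):x_u)$ are as closed neighborhood ideals of explicit subgraphs (with the right set of ``extra'' variables that contribute nothing to the regularity by \Cref{extra variable}), and confirming the matching-number inequalities $a_{T'} \le a_T$ and $a_{T \setminus N_T[u]} + 1 \le a_T$ in all configurations (in particular when $u$ is adjacent to non-leaf vertices, or is the root, or when the remaining graph is disconnected, requiring \Cref{component lemma}). A subtlety is that after setting $x_u = 0$ the monomial $m_{(T,v)}$ for a neighbor $v$ of $u$ may lose its minimality status or not — one must argue that $\langle NI(T), x_u\rangle$ restricted to the remaining variables equals $\langle NI(T \setminus N_T[u] \cup \{\text{other neighbors of }u\}) , \ldots\rangle$ correctly, possibly needing a more refined induction that keeps track of the whole forest rather than a single tree. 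Choosing the induction parameter and the split variable so that both branches of \Cref{lemma1}(ii) strictly decrease it while the matching-number arithmetic works out cleanly is where the care is needed; once that is set up, each step is routine.
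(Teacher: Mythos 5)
Your overall strategy coincides with the paper's: prove the lower bound by citing an already-established result, then prove $\reg(R_T/NI(T))\le a_T$ by induction, handling stars via \Cref{star} and otherwise applying \Cref{lemma1}(ii) to the variable $y$ that is the parent of a deepest leaf, so that $(NI(T):y)=\langle x_1,\dots,x_t\rangle+NI(T')$ for a smaller tree $T'$ and the matching estimate $a_{T'}+1\le a_T$ closes that branch. However, two of the bookkeeping points you flag are exactly where your plan as written breaks, and one of them requires an idea you have not supplied.

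First, a correctable slip in the colon branch: the relevant subtree is not $T\setminus N_T[y]$ but $T'=T\setminus\{y,x_1,\dots,x_t\}$, which retains the unique non-leaf neighbour $b$ of $y$; the generator $m_{(T,b)}/y=m_{(T',b)}$ survives the colon operation and is supported on $b$ and its remaining neighbours, so deleting $b$ as well would misidentify the ideal. Second, and more seriously, the branch $\langle NI(T),y\rangle$: your proposed identification of this ideal with $\langle y\rangle$ plus the closed neighborhood ideal of the forest obtained by deleting $y$ and its leaf children is false in general, because $NI(T')$ contains the extra generator $m_{(T',b)}$, which need not lie in $\langle NI(T),y\rangle$; and since regularity is not monotone under inclusion of ideals, you cannot simply replace the ideal by the larger one. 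The paper's device is to introduce the intermediate tree $T''=T\setminus\{x_1,\dots,x_t\}$, in which $y$ becomes a leaf attached to $b$: then $m_{(T'',y)}=yb$ absorbs $m_{(T'',b)}$, one gets the exact identity $\langle NI(T''),y\rangle=\langle NI(T),y\rangle$, and \Cref{lemma1}(i) gives $\reg(R_{T''}/\langle NI(T''),y\rangle)\le\reg(R_{T''}/NI(T''))\le a_{T''}\le a_T$ by induction. Without $T''$ or an equivalent trick this branch remains open. Finally, inducting on height alone does not work, since neither $T'$ nor $T''$ need have strictly smaller height when several branches of $T$ attain the maximum level; induct on $|V(T)|$, which both subtrees strictly decrease.
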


    \begin{proof}
    A tree is a connected forest and therefore, by \cite[Theorem 2.5]{SM}, we have $\reg(R_{T}/NI(T)) \geq a_{T}$. Thus it is enough to show that, for any tree $T$, $\reg(R_T/NI(T))\le a_T$, i.e., $a_{T}$ is the upper bound for $\reg(R_{T}/NI(T))$. We prove this assertion by induction on the number of vertices of $T$, i.e., $|V(T)|$. Let $V(T)=\{x_1,\ldots,x_n\}$. If $n\le 2$, then either $NI(T)=\langle 0 \rangle$ or $NI(T)=\langle x_1x_2\rangle$. In this case, note that, if $E(T)\neq \emptyset$, then
    \[
    0\rightarrow R_T(-2)\xrightarrow{\mu}R_T\xrightarrow{\eta}R_T/NI(T)\rightarrow 0
    \]
    is a graded minimal free resolution of $R_T/NI(T)$, where $\mu$ is the multiplication map by $x_1x_2$ and $\eta$ is the natural quotient map. Thus $\reg(R_T/NI(T))=1=a_T$. Hence, $\reg(R_T/NI(T))= a_T$. Therefore, we may assume that $|V(T)|\ge 3$.
    
     First, consider the case when $T$ is a star graph. Then, by \Cref{star}, $\reg(R_T/NI(T))=1=a_T$. Now suppose
     $T$ is not a star graph. From now onwards we consider $T$ to be a rooted tree with some fixed vertex, say $z$, as the root of $T$. Now let $x_1$ be a vertex of $T$ such that $\mathrm{level}(x_1)=\mathrm{height}(T)$. Clearly, $x_1$ is a leaf. Let $N_T(x_1)=\{y\}$. It is easy to see that the neighborhood of $y$ in T is the set $N_T(y)=\{b,x_1,\ldots,x_t\}$, where $b$ is not a leaf and each $x_i$ is a leaf of $T$ for $i\in [t]$. Note that such a non-leaf vertex $b$ exists since we assumed that $T$ is not a star graph. Now the closed neighborhood ideal $NI(T)$ of $T$ can be expressed as follows. 
        \begin{align}\label{eq1}
        NI(T)=\langle\{ x_1y,\ldots,x_ty, m_{(T,b)},m_{(T,w)}: w \in V(T)\setminus  N_T[y] \}\rangle. 
        \end{align}
    Since $y\in N_T(b)$, we have $y\mid m_{(T,b)}$. However, for each $w\notin N_T[y]$, $y$ does not divide the monomial $m_{(T,w)}$. Hence,
        \begin{align}\label{eq2}
        ( NI(T):y)=\left\langle\left\{ x_1,\ldots,x_t, \frac{m_{(T,b)}}{y},m_{(T,w)}: w \in V(T)\setminus  N_T[y] \right\}\right\rangle,
        \end{align} 
        and
       \begin{align*}
       \langle NI(T),y\rangle=\left\langle\{ y,m_{(T,w)}: w \in V(T)\setminus  N_T[y]\}\right \rangle.
       \end{align*}    
       
     Now let us construct a new tree $T'$ by removing the vertices $y, x_{1}, \ldots , x_{t}$ and the corresponding adjacent edges from $T$, i.e., $T'=T\setminus \{y,x_1,\ldots,x_t\}$. Observe that $N_{T'}[b]=N_T[b]\setminus\{y\}$. Hence, the monomial $m_{(T',b)}=\frac{m_{(T,b)}}{y}$. Now if $w$ is a vertex of $T'$ such that $w\neq b$, then $N_{T'}[w]=N_T[w]$. Consequently, $m_{(T',w)}=m_{(T,w)}$. Therefore, the closed neighborhood ideal $NI(T')$ of $T'$ is
    \begin{align}\label{eq4}
    NI(T')=\left\langle \left\{\frac{m_{(T,b)}}{y}, m_{(T,w)} : w \in V(T)\setminus  N_T[y] \right\}\right\rangle.
    \end{align}
By combining \Cref{eq2} and \Cref{eq4}, we get that the colon ideal $(NI(T):y)$ can be expressed as the sum of the ideals $\langle x_1,\ldots,x_t \rangle$ and $NI(T')$. Note that $V(T')\cap \{x_1,\ldots,x_t\}=\emptyset$. Thus by \Cref{reg sum}, $\reg(R_T/(NI(T):y))=\reg(R_{T'}/NI(T'))+\reg(\mathbb K[x_1,\ldots,x_n]/\langle 
x_1,\ldots,x_n \rangle)$. It is easy to see that
\[
0\rightarrow \mathbb K[x_1](-1)\xrightarrow{\mu}\mathbb K[x_1]\xrightarrow{\eta}\mathbb K[x_1]/\langle x_1 \rangle\rightarrow 0
\]
is a minimal free resolution of $\mathbb K[x_1]/\langle x_1\rangle $, where $\mu$ is the multiplication map by $x_1$ and $\eta$ is the natural quotient map. Hence, $\reg(\mathbb K[x_1]/\langle x_1\rangle)=0$. Therefore, by a repeated application of \Cref{reg sum} we have, $\reg(\mathbb K[x_1,\ldots,x_n]/\langle 
x_1,\ldots,x_n \rangle)=0$. Thus $\reg(R_{T'}/NI(T'))=\reg(R_T/(NI(T):y))$.
    
     Now let $T'' =T \setminus \{x_1,\ldots,x_t\}$. Note that $y$ is a leaf of $T''$ with $N_{T''}(y)=\{b\}$. Hence, the monomial $m_{(T'',y)}$ divides $m_{(T'',b)}$. Moreover, for each $w\notin N_T[y]$,  $N_T[w]=N_{T''}[w]$  and hence, $m_{(T,w)}=m_{(T'',w)}$. Therefore, the closed neighborhood ideal of $T''$ is 
    \[
    NI(T'') = \langle\{m_{(T'',y)},m_{(T'',w)}: w \in V(T)\setminus  N_T[y]\}\rangle.
    \]
    Since $x_i\notin V(T'')$ for all $i$, we have $\langle NI(T''),y \rangle=\langle NI(T),y \rangle$. Hence, $\reg(R_T/(\langle NI(T),y \rangle)=\reg(R_{T''}/\langle NI(T''),y \rangle)$. We also note that the variable $y$ appears in the generator $m_{(T'',y)}$ of $NI(T'')$. Therefore, by \Cref{lemma1} we have,
\begin{align}\label{eq5}
\reg(R_T/NI(T))\le\max\{\reg(R_{T'}/NI(T'))+1,\reg(R_{T''}/(\langle NI(T''),y \rangle)\}.
\end{align}

We now use induction on the number of vertices of $T'$ and $T''$ to find the upper bounds on the regularity of their closed neighborhood ideals in terms of their matching numbers. Let $M$ be a matching of $T'$ such that $|M| = a_{T'}$. Observe that as $y,x_{i} \notin V(T')$ for all $i = 1, \dots t$, the set $\{\{y, x_{1}\}\} \cup M$ is a matching of T. This implies $a_T\ge |M|+1= a_{T'} +1$. Since $|V(T')|<|V(T)|$, by induction hypothesis we have, $\reg(R_{T'}/NI(T'))\le a_{T'}<a_T$. Therefore, $\reg(R_{T'}/NI(T'))+1\le a_T$. 

Applying \Cref{lemma1} on the ideal $NI(T'')$ we have, $\reg(R_{T''}/\langle NI(T''),y\rangle)\le \reg(R_{T''}/NI(T''))$. Now let $M''$ be a matching of $T''$ such that $M''=a_{T''}$. Then, $M''$ is also a matching of $T$. Hence, $a_T\ge a_{T''}$. Since $|V(T'')|<|V(T)|$, by induction hypothesis we have, $\reg(R_{T''}/ NI(T''))\le a_{T''}\le a_T$. Hence, $\reg(R_{T''}/\langle NI(T''),y\rangle)\le a_T$. Therefore, from \Cref{eq5} and from the above discussions we obtain $\reg(R_T/NI(T))\le a_T$. This completes the proof of the theorem.
    \end{proof}

        Using \Cref{component lemma} and the above theorem, we now give a proof of \cite[Conjecture 2.11]{SM} as follows.
    \begin{customthm}{\ref{conjecture proof}}
        Let $G$ be a forest. Then, $\reg(R_G/NI(G))=a_G$, where $a_G$ is the matching number of $G$.
    \end{customthm}
    
    \begin{proof}
        Let $G$ be a forest and let $\mathcal C_1,\ldots,\mathcal C_t$ be the connected components of $G$. Then, each $\mathcal C_i$ is a tree. Hence, by \Cref{tree theorem}, $\reg(R_{\mathcal C_i}/NI(\mathcal C_i))=a_{\mathcal C_i}$. Therefore, using \Cref{component lemma} we get, $\reg(R_G/NI(G))=a_G$.
    \end{proof}

    As mentioned above, Sharifan and Moradi in \cite[Theorem 2.5]{SM} showed that for forests, the regularity of the closed neighborhood ideal is bounded below by the matching number. We now show that it can be proved for all graphs using a recent topological result by Matsushita and Wakatsuki \cite{MaWa}. 
    Before proving this result, let us recall briefly some facts from the Stanley-Reisner theory of square-free monomial ideals. We refer the readers to \cite{ StanleyBook, RHV} for quick references.
    
    Any square-free monomial ideal $I$ in the polynomial ring $R=\mathbb K[x_1,\ldots,x_n]$ can be realized as the Stanley-Reisner ideal of a simplicial complex $\Delta$, and vice-versa. Recall that, an {\it (abstract) simplicial complex} $\Delta$ on $V=\{x_1,\ldots,x_n\}$ is a collection of subsets of $V$, that satisfies the properties that $\{x_i\} \in \Delta$ for all $i=1,\ldots,n$ and if $F \in \Delta$ and $G \subseteq F$, then $G \in \Delta$. For $W \subseteq V$, we write 
\[
\mathbf x_W = \prod_{x_i \in W} x_i,
\]
to denote the monomial in $R$ obtained by multiplying together all the variables corresponding to the vertices in $W$. Now given a square-free monomial ideal $I$, the complex $\Delta(I)=\{F\subseteq V: \mathbf x_F\notin I\}$ is called the {\it Stanley-Reisner complex} of $I$. Conversely, for a simplicial complex $\Delta$, the ideal $I_{\Delta}=\langle \mathbf x_{F}:F\notin\Delta\rangle$ is called the {\it Stanley-Reisner ideal} of $\Delta$ and it is easy to check that $\Delta(I_{\Delta})=\Delta$ and $I_{\Delta(I)}=I$. This correspondence between square-free monomial ideals and the associated simplicial complexes enables one to study the algebraic properties of $I$ using the topological properties of $\Delta(I)$. In the next theorem, we use this correspondence by first determining the Stanley-Reisner complex of $NI(G)$.

 Let $G$ be a finite simple graph. A subset $W\subseteq V(G)$ is called a {\it dominating set} in $G$ if each $v\in V(G)$ is either contained in $W$ or adjacent to an element in $W$. Following \cite{MaWa}, we define the {\it dominance complex},
\[
\D(G)=\{W\subseteq V(G): W^c\text{ is a dominating set in }G\}.
\]
It is easy to see that $\D(G)$ is an abstract simplicial complex, and the Stanley-Reisner ideal of $\D(G)$ is $NI(G)$ (see \cite[Lemma 2.1]{MaWa}).

\begin{customthm}{\ref{thm:conj}}
    For any graph $G$, $\reg(R_G/NI(G))\ge a_G$, where $a_G$ is the matching number of $G$.
\end{customthm}

\begin{proof}
    By Hochster's formula \cite{Hochster}, 
    \[
        \reg(R_G/I_{\D(G)})\ge (\text{h-}\dim(\D(G)))+1.
    \]
    Here $\text{h-}\dim(\D(G))=\max\{i : \widetilde{H}_i(\D(G);\mathbb Z)\neq 0\}$ is the {\it homological dimension} of $\D(G)$, and $\widetilde{H}_i(\D(G);\mathbb Z)$ denotes the $i^{th}$-reduced homology with coefficient in $\mathbb Z$ of the chain complex associated to $\D(G)$. Furthermore, using \cite[Theorem 2]{lovasz} and \cite[Theorem 1.1 $\&$ Lemma 2.3]{MaWa}, we get that 
    \[
        \chi(G^c) \ge n-(\text{h-}\dim(\D(G)))-1,
    \]
    where $\chi(G^c)$ is the chromatic number of the complement graph $G^c$. Thus, 
    \[
        \reg(R_G/I_{\D(G)})\ge n-\chi(G^c).
    \]
    Observe that $n\ge\chi(G^c)+a_G $. Therefore, 
    \[
        \reg(R_G/I_{\D(G)})\ge a_G.
    \]
    We conclude the proof by observing that $I_{\D(G)}=NI(G)$.
\end{proof}

\begin{remark}\label{complete graph remark}
        Motivated by \Cref{conjecture proof} and \Cref{thm:conj}, one may ask whether $\reg(R_G/NI(G))$ is the same as $a_G$ for any chordal graph $G$. However, this is not true, and as an example we can take a complete graph on $3$ or more vertices. If $K_m$ denotes the complete graph on $m$ vertices $\{x_1,\ldots,x_m\}$, then $NI(K_m)=\langle x_1\cdots x_m  \rangle$. Hence, 
        \[
        0\rightarrow R_{K_m}(-m)\xrightarrow{\mu} R_{K_m}\xrightarrow{\eta} R_{K_m}/NI(K_m)\rightarrow 0
        \]
        is the minimal free resolution of $R_{K_m}/NI(K_m)$, where $\mu$ is the multiplication map by $x_1\cdots x_m$ and $\eta$ is the quotient map. Therefore, $\reg(R_{K_m}/NI(K_m))=m-1$. Note that $a_{K_m}=\lfloor 
 \frac{m}{2}\rfloor$. Thus for $m\ge 3$, $\reg(R_{K_m}/NI(K_m))\neq a_{K_m}$.
    \end{remark}

    
We now proceed to show that the matching number $a_G$ is also a lower bound for the projective dimension of $R/NI(G)$ for certain classes of graphs. To get our results, we mainly use the concept of Betti splitting which we recall here.

    \begin{definition}\cite{FHVT}
        Let $I,J$ and $K$ be monomial ideals such that $\G(I)=\G(J)\sqcup\G(K)$. Then, $I=J+K$ is called a Betti splitting if 
        \[
        \beta_{i,j}(I)=\beta_{i,j}(J)+\beta_{i,j}(K)+\beta_{i-1,j}(J\cap K),
        \]
        for all non-negative integers $i$ and degrees $j$.
    \end{definition}
    For splitting of monomial ideals Francisco, H\`a, and Van Tuyl \cite{FHVT} proved the following.
    \begin{theorem}\cite[Corollary 2.7]{FHVT}\label{Betti splitting}
        Let $I\subseteq R=\mathbb K[x_1,\ldots,x_n]$ be a monomial ideal. Fix a variable $x_i$ and set
        \[
        J=\langle \{  m\in\G(I): x_i\mid m \} \rangle \text{  and  }K=\langle \{ m\in\G(I): x_i\nmid m \} \rangle.
        \]
        If $J$ has a linear resolution, then $I=J+K$ is a Betti splitting.
    \end{theorem}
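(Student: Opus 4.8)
The plan is to establish the splitting homologically. Consider the short exact sequence
\[
0\longrightarrow J\cap K\longrightarrow J\oplus K\longrightarrow J+K=I\longrightarrow 0,
\]
in which the first map is $c\mapsto(c,c)$ and the second is $(a,b)\mapsto a-b$, together with its long exact sequence in $\operatorname{Tor}(-,\mathbb{K})$. Since $\G(J)$ and $\G(K)$ form a partition of the minimal generating set $\G(I)$ by construction, one has $\beta_{0,j}(I)=\beta_{0,j}(J)+\beta_{0,j}(K)$ for all $j$; and a direct dimension count in the long exact sequence shows that $I=J+K$ is a Betti splitting if and only if, for every $i\ge 0$, the maps
\[
\operatorname{Tor}_i^R(J\cap K,\mathbb{K})\longrightarrow\operatorname{Tor}_i^R(J,\mathbb{K})\quad\text{and}\quad\operatorname{Tor}_i^R(J\cap K,\mathbb{K})\longrightarrow\operatorname{Tor}_i^R(K,\mathbb{K})
\]
induced by the two inclusions are both zero. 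So I would reduce the theorem to proving these two vanishing assertions; the point is that they hold for two different reasons, one coming from the variable $x_i$ and the other from the linear resolution of $J$.

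For the map into $\operatorname{Tor}_i^R(K,\mathbb{K})$, I would pass to the finer $\mathbb{Z}^n$-grading. It is standard that every multigraded shift occurring in the minimal free resolution of a monomial ideal is a least common multiple of a subset of its minimal generators. Every minimal generator of $K$ is coprime to $x_i$, hence so is every such lcm, so $\beta_{i,\mathbf{b}}(K)\ne 0$ forces the $x_i$-exponent of $\mathbf{b}$ to be $0$. On the other hand, every element of $J\cap K$, in particular every minimal generator, is a multiple of some $m\in\G(J)$ and hence divisible by $x_i$, so the same is true of all their lcm's, whence $\beta_{i,\mathbf{a}}(J\cap K)\ne 0$ forces the $x_i$-exponent of $\mathbf{a}$ to be positive. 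Thus the multidegree supports of $\operatorname{Tor}_i^R(J\cap K,\mathbb{K})$ and $\operatorname{Tor}_i^R(K,\mathbb{K})$ are disjoint, and the induced map is zero.

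For the map into $\operatorname{Tor}_i^R(J,\mathbb{K})$, I would use that $J$ has a linear resolution: then $J$ is generated in a single degree $d$ and $\beta_{i,j}(J)=0$ unless $j=i+d$. I then claim $J\cap K$ is generated in degrees $\ge d+1$. Indeed, its minimal generators lie among the monomials $\operatorname{lcm}(m,m')$ with $m\in\G(J)$ and $m'\in\G(K)$, and since $\G(I)=\G(J)\sqcup\G(K)$ is a \emph{minimal} generating set of $I$, no $m'\in\G(K)$ divides any $m\in\G(J)$ (such $m,m'$ are distinct, one being divisible by $x_i$ and the other not), so $\deg\operatorname{lcm}(m,m')>\deg m=d$. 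Hence $\beta_{i,j}(J\cap K)=0$ whenever $j<(d+1)+i$, in particular $\beta_{i,i+d}(J\cap K)=0$; so for every pair $(i,j)$ at least one of $\operatorname{Tor}_i^R(J\cap K,\mathbb{K})_j$ and $\operatorname{Tor}_i^R(J,\mathbb{K})_j$ vanishes, and again the induced map is zero.

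Granting the two vanishings, the long exact sequence breaks into short exact sequences
\[
0\longrightarrow\operatorname{Tor}_i^R(J,\mathbb{K})_j\oplus\operatorname{Tor}_i^R(K,\mathbb{K})_j\longrightarrow\operatorname{Tor}_i^R(I,\mathbb{K})_j\longrightarrow\operatorname{Tor}_{i-1}^R(J\cap K,\mathbb{K})_j\longrightarrow 0
\]
for all $i,j$, which is precisely the identity $\beta_{i,j}(I)=\beta_{i,j}(J)+\beta_{i,j}(K)+\beta_{i-1,j}(J\cap K)$, the defining relation of a Betti splitting. The step I expect to be most delicate is the one in the third paragraph above: one must notice that the degree bound on $\G(J\cap K)$ that is actually available is the \emph{strict} one $\deg\operatorname{lcm}(m,m')\ge d+1$ — which is exactly enough to clear the entire linear strand of $J$, and which is where minimality of $\G(I)$ is genuinely used — while the second paragraph requires the \emph{multigraded} structure of $\operatorname{Tor}$ rather than the standard grading alone.
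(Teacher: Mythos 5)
The paper offers no proof of this statement---it is quoted verbatim from \cite[Corollary 2.7]{FHVT}---so the only comparison available is with that source, and your argument is correct and essentially reproduces it: the Mayer--Vietoris sequence reduces the Betti-splitting identity to the vanishing of the two maps $\operatorname{Tor}_i^R(J\cap K,\K)\to\operatorname{Tor}_i^R(J,\K)$ and $\operatorname{Tor}_i^R(J\cap K,\K)\to\operatorname{Tor}_i^R(K,\K)$, the second map vanishes for multigraded reasons (every multidegree of the resolution of $J\cap K$ has positive $x_i$-exponent while every multidegree of the resolution of $K$ has $x_i$-exponent zero), and the first vanishes because the strict bound $\deg\operatorname{lcm}(m,m')\ge d+1$---where minimality of $\G(I)$ is genuinely needed---places all of $\operatorname{Tor}_i^R(J\cap K,\K)$ off the linear strand $j=i+d$ carrying $\operatorname{Tor}_i^R(J,\K)$. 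You have correctly isolated the two delicate points (the use of the $\mathbb Z^n$-grading and the strictness of the degree bound), so there is nothing to add.
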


    We use the following formula of projective dimension for Betti splitting of monomial ideals.

    \begin{theorem}\cite[cf. Corollary 2.2]{FHVT}\label{regularity and projective dimension}
        Let $I=J+K$ be a Betti splitting of the monomial ideal $I$. Then,
        \[
           \pd(R/I)=\max\{\pd(R/J),\pd(R/K),\pd(R/J\cap K)+1\}.
      \]
    \end{theorem}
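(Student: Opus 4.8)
The plan is to read the formula off directly from the definition of a Betti splitting, after translating everything from ideals to quotient rings. The two ingredients are: (i) for any nonzero proper monomial ideal $L\subseteq R$, the minimal free resolution of $R/L$ is the minimal free resolution of $L$ with one extra copy of $R$ prepended, so that $\beta_{i,j}(R/L)=\beta_{i-1,j}(L)$ for all $i\ge 1$, and hence $\pd(R/L)=\pd(L)+1$; and (ii) the defining identity $\beta_{i,j}(I)=\beta_{i,j}(J)+\beta_{i,j}(K)+\beta_{i-1,j}(J\cap K)$ of a Betti splitting.

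Combining (i) and (ii) for the four ideals $I$, $J$, $K$, and $J\cap K$ gives, for every $i\ge 1$ and every $j$,
\[
\beta_{i,j}(R/I)=\beta_{i,j}(R/J)+\beta_{i,j}(R/K)+\beta_{i-1,j}\big(R/(J\cap K)\big).
\]
Since all three summands on the right-hand side are nonnegative, no cancellation can occur: for a fixed $i\ge1$, some graded Betti number $\beta_{i,j}(R/I)$ is nonzero precisely when some $\beta_{i,j}(R/J)$, some $\beta_{i,j}(R/K)$, or some $\beta_{i-1,j}(R/(J\cap K))$ is nonzero. Taking the largest such $i$ --- and noting that $\pd(R/I),\pd(R/J),\pd(R/K)\ge1$, so that the maximum is attained in positive homological degree, where the identity above is valid --- yields
\[
\pd(R/I)=\max\{\pd(R/J),\ \pd(R/K),\ \pd(R/(J\cap K))+1\},
\]
the ``$+1$'' being exactly the homological shift attached to the $J\cap K$ term.

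There is no serious obstacle here: the argument is a purely formal consequence of the Betti-splitting identity, and the one point requiring attention is the nonnegativity/no-cancellation observation --- which is precisely what the Betti-splitting hypothesis provides, since without it the analogous equality can fail. The remaining bookkeeping issues, namely that the shifted identity in (i) fails in homological degree $0$ and that one tacitly assumes $J$, $K$, and hence $J\cap K$, to be nonzero, are harmless in all the applications to closed neighborhood ideals made below.
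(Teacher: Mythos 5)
The paper gives no proof of this statement---it is quoted from \cite[Corollary 2.2]{FHVT}---and your derivation is correct and is precisely the standard one: the Betti-splitting identity has only nonnegative summands, so the top nonvanishing homological degree of $I$ is $\max\{\pd(J),\pd(K),\pd(J\cap K)+1\}$, and shifting by one to pass from ideals to quotient rings gives the formula. The only blemish is that your displayed identity for $\beta_{i,j}(R/-)$ actually fails at $(i,j)=(1,0)$ (where $\beta_{0,0}(R/(J\cap K))=1$ contributes spuriously), not merely ``in homological degree $0$'' as you say; this is harmless here since $\pd(R/(J\cap K))+1\ge 2$ forces the maximum to be attained in homological degree at least $2$, where the identity is valid.
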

    Using the above formula, we obtain the following interesting result concerning the projective dimension of the closed neighborhood ideal of a graph that contains a leaf.

\begin{theorem}\label{chordal closed neighborhood}
Let $G$ be a graph and $x\in V(G)$ be a simplicial vertex of $G$. Then $NI(G)$ admits a Betti splitting. Furthermore, if $x$ is a leaf then $\pd(R_{G'}/NI(G'))\ge a_{G'}$ implies $\pd(R_G/NI(G))\ge a_G$, where $G'=G\setminus N_G[x]$.
    \end{theorem}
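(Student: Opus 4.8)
The plan is to follow the recipe of \Cref{Betti splitting}: fix the variable $x$ and split $NI(G)$ as $J+K$, where $J$ is generated by those minimal generators of $NI(G)$ divisible by $x$ and $K$ by those not divisible by $x$. To apply \Cref{Betti splitting} we must show $J$ has a linear resolution. Since $x$ is simplicial, $N_G[x]$ induces a complete graph, so $m_{(G,x)} = \mathbf{x}_{N_G[x]}$, and for every neighbor $u$ of $x$ we have $N_G[x] \subseteq N_G[u]$, hence $m_{(G,u)}$ is divisible by $m_{(G,x)}$ and is therefore \emph{not} a minimal generator of $NI(G)$. Consequently the only minimal generator of $NI(G)$ divisible by $x$ is $m_{(G,x)}$ itself (any other $m_{(G,w)}$ with $x \in N_G[w]$ would force $w \in N_G(x)$, already handled). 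Thus $J = \langle m_{(G,x)} \rangle$ is a principal ideal, which trivially has a linear resolution, and \Cref{Betti splitting} yields that $NI(G) = J + K$ is a Betti splitting. This settles the first assertion.

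For the second assertion, assume additionally that $x$ is a leaf, say $N_G(x) = \{y\}$, so $m_{(G,x)} = xy$ and $J = \langle xy \rangle$. Set $G' = G \setminus N_G[x] = G \setminus \{x,y\}$. The first step is to identify $K$ with (a relabeling of) $NI(G')$ up to multiplication by extra variables. A vertex $w \notin N_G[x]$ has $m_{(G,w)}$ not divisible by $x$; among these, I would argue that $K$ is minimally generated exactly by the $m_{(G,w)}$ for $w \notin N_G[y]$ (if $x \ne w \in N_G[y]$ then $w$ is adjacent to $y$, and $m_{(G,w)}$ is divisible by $xy$ only when $x \in N_G[w]$, i.e. $w = y$, which is excluded; so actually all $w \notin N_G[x]$ contribute, but a generator $m_{(G,w)}$ may fail to be minimal if some other closed neighborhood is contained in it — this needs care). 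In $G'$ the closed neighborhood of any $w \notin N_G[y]$ is unchanged ($N_{G'}[w] = N_G[w]$), while for a neighbor $b$ of $y$ other than $x$ one has $N_{G'}[b] = N_G[b] \setminus \{y\}$, so $m_{(G',b)} = m_{(G,b)}/y$. Hence $K$ and $NI(G')$ differ only by the variable $y$ dividing the generators coming from $N_G(y)$, together with the presence of the extra variable $y$ (and $x$) in the ambient ring. The precise bookkeeping — whether $K = y^{?} \cdot (\text{something}) + (\text{rest})$ or whether one must split further — is the delicate part; one likely needs \Cref{extra variable} to add back the variables $x$ and $y$ without changing the projective dimension, giving $\pd(R_G/K) = \pd(R_{G'}/NI(G'))$ (possibly after a further application of the component decomposition if $G'$ is disconnected, but projective dimension is additive over $+$ of ideals in disjoint variables by \Cref{reg sum}).

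With that identification in hand, I would finish via \Cref{regularity and projective dimension}:
\[
\pd(R_G/NI(G)) = \max\{\pd(R_G/J),\ \pd(R_G/K),\ \pd(R_G/(J \cap K)) + 1\}.
\]
Here $\pd(R_G/J) = 1$ since $J = \langle xy \rangle$ is principal. For the third term, $J \cap K = xy \cdot (K : xy)$ up to... more carefully, since $J = \langle xy \rangle$, $J \cap K = xy K'$ for the appropriate colon, and by \Cref{extra variable} its projective dimension equals $\pd$ of the corresponding ideal in fewer variables; the key is that $K' = (K : xy)$ is, after removing the (now redundant) variable $y$ and using that the generators of $K$ involving $y$ have that $y$ stripped, essentially $NI(G')$ again, or something closely related to it, so $\pd(R_G/(J\cap K)) + 1$ is also governed by $\pd(R_{G'}/NI(G'))$. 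In any case, the middle term $\pd(R_G/K) = \pd(R_{G'}/NI(G'))$ is already $\ge a_{G'}$ by hypothesis, and one checks that $a_G \le a_{G'} + 1$ while the splitting formula produces a term that is at least $\pd(R_{G'}/NI(G')) + 1 \ge a_{G'} + 1 \ge a_G$ (the $+1$ coming from either $\pd(R_G/(J\cap K)) + 1$ or from handling the edge $\{x,y\}$). The main obstacle, as indicated, is the careful description of the minimal generators of $K$ and of $J \cap K$ — in particular verifying that no minimal generator of $K$ is lost to a containment and tracking exactly which variables are stripped — and then matching these against $NI(G')$ precisely enough to invoke \Cref{extra variable} and conclude $a_G \le a_{G'} + 1 \le \pd(R_G/NI(G))$.
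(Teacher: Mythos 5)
Your first assertion is handled correctly and exactly as in the paper: since $x$ is simplicial, $N_G[x]\subseteq N_G[u]$ for every $u\in N_G(x)$, so $m_{(G,x)}$ divides $m_{(G,u)}$ and the only minimal generator of $NI(G)$ divisible by $x$ is $m_{(G,x)}$ itself; hence $J=\l m_{(G,x)}\r$ is principal, trivially has a linear resolution, and \Cref{Betti splitting} gives the Betti splitting. That part is complete.

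For the second assertion your overall plan (extract the ``$+1$'' from the term $\pd(R_G/(J\cap K))+1$ in \Cref{regularity and projective dimension} and combine it with $a_G\le a_{G'}+1$) is the paper's plan, but the step you repeatedly flag as ``the delicate part'' is precisely the heart of the argument, and you leave it undone. The needed identity is $J\cap K=xy\cdot NI(G')$, and the verification is short: since $J=\l xy\r$ is principal, $J\cap K=xy\,(K:xy)$; for a generator $m_{(G,w)}$ of $K$ (so $w\notin\{x,y\}$) one has $x\nmid m_{(G,w)}$, hence $\gcd(m_{(G,w)},xy)$ equals $y$ if $w\in N_G(y)$ and $1$ otherwise, and correspondingly $m_{(G,w)}/y=m_{(G',w)}$ or $m_{(G,w)}=m_{(G',w)}$; therefore $(K:xy)=NI(G')$. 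Two applications of \Cref{extra variable} then give $\pd(R_G/(J\cap K))=\pd(R_{G'}/NI(G'))$, so $\pd(R_G/NI(G))\ge\pd(R_{G'}/NI(G'))+1\ge a_{G'}+1\ge a_G$, where the matching inequality $a_G\le a_{G'}+1$ also needs its one\nobreakdash-line proof (a maximum matching of $G$ contains at most one edge meeting $\{x,y\}$, and the remaining edges form a matching of $G'$). Finally, your detour through the claim $\pd(R_G/K)=\pd(R_{G'}/NI(G'))$ is both unjustified --- the generators of $K$ indexed by $N_G(y)\setminus\{x\}$ still carry the variable $y$, so $K$ is not $NI(G')$ up to adjoining extra variables --- and unnecessary: even if it held, it would only yield the bound $\ge a_{G'}$, not the required $\ge a_{G'}+1$.
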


    \begin{proof}
Without loss of generality, let $V(G)=\{x_1,\ldots,x_n\}$, where $x_1$ is a simplicial vertex with $N_G(x_1)=\{x_2,\ldots,x_r\}$ for some $r\ge 2$. Then, it is easy to see that $m_{(x_1,G)} \in \G(NI(G))$ and either $m_{(x_i,G)}=m_{(x_1,G)}$ or $m_{(x_i,G)} \notin \G(NI(G))$ for each $i\in[r]$. Now define the ideals
     \[
     J=\langle m_{(G,x_1)}\rangle \text{  and  }K=\langle\{ m_{(G,x_i)}: i>r\}\rangle.
     \]
     Then, $J=\langle \{ m\in\G(NI(G)): x_1\mid m \} \rangle$ and $K=\langle \{ m\in\G(NI(G)): x_1\nmid m \} \rangle$. Moreover, $NI(G)=J+K$ with $\G(NI(G))=\G(J)\sqcup \G(K)$. Therefore, by \Cref{Betti splitting}, we have that $NI(G)=J+K$ is a Betti splitting. Hence, using \Cref{regularity and projective dimension} we obtain
     \[
         \pd(R_G/NI(G))=\max\{\pd(R_G/J),\pd(R_G/K),\pd(R_G/J\cap K)+1\}.
     \]
     In particular, $\pd(R_G/NI(G))\ge \pd(R_G/J\cap K)+1$. 
     
     We proceed to show that $J\cap K=\prod_{i=1}^rx_i\cdot NI(G')$, where $G'=G\setminus N_G[x_1]$. Let  $m_{(G,x_t)}\in K$, which implies that $t>r$. For such an $x_t$, if $x_t\in \cup_{i=1}^rN_G[x_i]$, then $N_G[x_t]=N_{G'}[x_t]\sqcup (N_G[x_t]\cap N_G(x_1))$, and if $x_t\notin \cup_{i=1}^r N_G[x_i]$, then $N_G[x_t]=N_{G'}[x_t]$. Thus in both the cases, $\mathrm{lcm}(m_{(G,x_1)},m_{(G,x_t)})=\prod_{i=1}^rx_i\cdot m_{(G',x_t)}$. Since $J\cap K$ is generated by $\mathrm{lcm}(m_{(G,x_1)},m_{(G,x_t)})$ for $t>r$, we see that $J\cap K=\prod_{i=1}^rx_i\cdot NI(G')$.
          
          For the second part of the theorem, we consider $x_1$ to be a leaf of $G$. In that case, $N_{G}(x_1)=\{x_2\}$ and $G'=G\setminus\{x_1,x_2\}$. Thus $J\cap K=x_1x_2\cdot NI(G')$. Hence, by applying the \Cref{extra variable} twice we obtain that $\pd(R_G/J\cap K)=\pd(R_{G'}/NI(G'))$. Thus, $\pd(R_G/NI(G))\ge\pd(R_{G'}/NI(G'))+1$. 

         Now let $M\subseteq E(G)$ be such that $|M|=a_G$, where $G$ contains the leaf vertex $x_1$. Then, $|M\cap E(G'')|\le 1$, where $G''$ is a subgraph of $G$ on the vertex set $V(G'')=V(G)$ with edge set $E(G'')=\{e\in E(G): e\cap \{x_1,x_2\}\neq\emptyset\}$. Also, $M\cap E(G')$ is a matching of $G'$. Note that $M=(M\cap E(G')\sqcup(M\cap E(G'')) $ and thus $a_G=|M|\le a_{G'}+1$. Since $\pd(R_G/NI(G))\ge\pd(R_{G'}/NI(G'))+1$ and $\pd(R_{G'}/NI(G'))\ge a_{G'}$, we have $\pd(R_G/NI(G))\ge a_G$. This completes the proof.
    \end{proof}

    In \cite[Theorem 2.5]{SM} Sharifan and Moradi showed that if $G$ is a forest, then $\pd(R_G/NI(G))\ge a_G$. As an application of \Cref{chordal closed neighborhood}, we give an alternate proof of this fact below.

    \begin{corollary}\label{forest pd}
        Let $F$ be a forest. Then, $\pd(R_F/NI(F))\ge a_F$.
    \end{corollary}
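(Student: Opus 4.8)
The plan is to prove the statement by induction on the number of vertices $|V(F)|$, using \Cref{chordal closed neighborhood} for the inductive step. The point is that the substantive work is already contained in that theorem, so the corollary amounts to checking that its hypotheses can be propagated along an induction over forests.

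\textbf{Base case.} If $F$ has no edges, then $a_F=0$, and since the projective dimension of any quotient ring is nonnegative, $\pd(R_F/NI(F))\ge 0=a_F$ holds trivially. (Here one notes that an isolated vertex $w$ contributes the generator $m_{(F,w)}=x_w$ to $NI(F)$, but no quantitative estimate on $\pd$ is needed, since the matching number already vanishes.)

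\textbf{Inductive step.} Suppose $F$ has at least one edge. A forest possessing an edge always contains a leaf; fix such a leaf $x$ with $N_F(x)=\{y\}$. A leaf is in particular a simplicial vertex, so \Cref{chordal closed neighborhood} applies to $G=F$ with simplicial vertex $x$. Set $F'=F\setminus N_F[x]=F\setminus\{x,y\}$. Since an induced subgraph of a forest is again a forest and $|V(F')|<|V(F)|$, the induction hypothesis gives $\pd(R_{F'}/NI(F'))\ge a_{F'}$. The second part of \Cref{chordal closed neighborhood} then yields $\pd(R_F/NI(F))\ge a_F$, completing the induction.

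The only obstacle to overcome beyond invoking \Cref{chordal closed neighborhood} is the purely graph-theoretic observation that the class of forests is closed under $G\mapsto G\setminus N_G[x]$ and that every forest with an edge has a leaf, both of which are elementary. (Alternatively, one could first reduce to the case of a tree via \Cref{reg sum} and the additivity of $a_G$ over connected components, exactly as in \Cref{component lemma}, and then run the same leaf induction on trees; but running the induction directly on forests is cleaner here.)
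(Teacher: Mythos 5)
Your proof is correct and follows essentially the same route as the paper: induction on $|V(F)|$, with the edgeless case as the base and the leaf-plus-\Cref{chordal closed neighborhood} reduction to $F'=F\setminus N_F[x]$ as the inductive step. (Your remark that the edgeless case needs no estimate on $\pd$ because $a_F=0$ is in fact slightly more careful than the paper's parenthetical claim that $\pd$ vanishes there.)
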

    \begin{proof}
        Let $V(F)=\{x_1,\ldots,x_n\}$. The proof is by induction on $n$. If $n\le 2$, then $NI(F)=\l x_1x_2\r$ or the zero ideal. In both cases $\pd(R_F/NI(F))\ge a_F$. Therefore, we may assume that $n\ge 3$. If $F$ consists of only isolated vertices, then clearly, $\pd(R_F/NI(F))=0=a_F$. Otherwise, $F$ contains a leaf vertex, say $x$. Note that $F'=F\setminus N_F[x]$ is also a forest and hence, by induction hypothesis, $\pd(R_{F'}/NI(F'))\ge a_{F'}$. Consequently, by \Cref{chordal closed neighborhood} we have, $\pd(R_F/NI(F))\ge a_F$.
    \end{proof}
    \begin{remark}\label{rem:pdoftree}
        Unlike \Cref{conjecture proof}, we do not have, in general, $\pd(R/NI(T))= a_T$, even if $T$ is a tree. For example, let $T$ be the path graph on the vertex set $\{x_1,x_2,x_3\}$. Then, $NI(T)=\l x_1x_2,x_2x_3\r$. Therefore, $\pd(R/NI(T))=2>1=a_T$.       
    \end{remark}

    In the next theorem, we show that $a_G$ provides a lower bound for the projective dimension of $R_G/NI(G)$, when $G$ is a unicyclic graph. Note that if $G$ is just a cycle $C_n$ of length $n$, then $NI(C_n)$ is nothing but the well-known $3$-path ideal of $C_n$. In general, let $G$ be a finite simple graph on the vertex set $\{x_1,\ldots,x_n\}$ and let $t$ be a positive integer. A path of length $t$ from a vertex $u$ to another vertex $v$ is a sequence of vertices $u=x_{i_1},x_{i_2},\ldots,x_{i_t}=v$ such that $\{x_{i_j},x_{i_{j+1}}\}\in E(G)$ for each $j\in [t-1]$. The {\it $t$-path ideal} of $G$, denoted by $J_t(G)$, is a monomial ideal generated by the monomials $\left\{\prod_{j=1}^tx_{i_j}: \{x_{i_1},x_{i_2},\ldots,x_{i_t}\} \text{ is a path in }G \right\}$. Now if $G=C_n$, then it is easy to see that $NI(C_n)=J_3(C_n)$. Alilooee and Faridi \cite{AlFa} determined the Betti numbers and the projective dimension of the $t$-path ideals of cycles. Using their formula in the case $t=3$ and using \Cref{chordal closed neighborhood}, we can show that the projective dimension of the closed neighborhood ideal of a unicyclic graph is bounded below by the matching number of the graph.

    \begin{theorem}\label{unicyclic}
        Let $G$ be a unicyclic graph. Then, $\pd(R_G/NI(G))\ge a_G$.
    \end{theorem}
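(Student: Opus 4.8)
The plan is to prove this by strong induction on $|V(G)|$, using \Cref{chordal closed neighborhood} to peel off a leaf in the generic case and the Alilooee--Faridi computation of $\pd(R/J_3(C_n))$ to settle the remaining case.

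First I would observe that the only unicyclic graphs without a leaf are the cycles: a connected unicyclic graph on $n$ vertices has exactly $n$ edges, hence degree sum $2n$, so if every vertex has degree at least $2$ then every vertex has degree exactly $2$ and $G = C_n$ for some $n \ge 3$. Since $NI(C_n) = J_3(C_n)$, in this case I would invoke the formula of Alilooee and Faridi \cite{AlFa} for the projective dimension of the $3$-path ideal of a cycle and verify, by a short case analysis on the residue of $n$ modulo $3$, that $\pd(R_{C_n}/NI(C_n)) \ge \lfloor n/2 \rfloor = a_{C_n}$. I expect this to be the main obstacle: one must unwind their (somewhat intricate) Betti-number formula and confirm the inequality in each residue class. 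It is, however, an explicit finite verification, and for large $n$ their value lies comfortably above $\lfloor n/2\rfloor$, so only small cycles require care.

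For the inductive step, assume $|V(G)| \ge 3$ and that $G$ has a leaf $x$ (otherwise $G = C_n$ and we are in the previous case). A leaf is a simplicial vertex, so \Cref{chordal closed neighborhood} reduces the claim to showing $\pd(R_{G'}/NI(G')) \ge a_{G'}$, where $G' = G \setminus N_G[x]$ and $|V(G')| < |V(G)|$. Because $G$ contains a unique cycle, every connected component of $G'$ is a tree, except possibly one component which is again unicyclic (this occurs exactly when the cycle of $G$ avoids $N_G[x]$), and that component has fewer than $|V(G)|$ vertices. By \Cref{forest pd} every tree component $\mathcal{C}$ satisfies $\pd(R_{\mathcal{C}}/NI(\mathcal{C})) \ge a_{\mathcal{C}}$, and by the induction hypothesis so does the unicyclic component, if there is one. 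Writing $\mathcal{C}_1,\ldots,\mathcal{C}_k$ for the components of $G'$, we have $NI(G') = \sum_{i} NI(\mathcal{C}_i)$ on pairwise disjoint variable sets, so \Cref{reg sum} gives $\pd(R_{G'}/NI(G')) = \sum_i \pd(R_{\mathcal{C}_i}/NI(\mathcal{C}_i))$, while $a_{G'} = \sum_i a_{\mathcal{C}_i}$ since a maximum matching of $G'$ splits across components (cf. the proof of \Cref{component lemma}). Hence $\pd(R_{G'}/NI(G')) \ge a_{G'}$, and \Cref{chordal closed neighborhood} yields $\pd(R_G/NI(G)) \ge a_G$, closing the induction.
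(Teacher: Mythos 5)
Your proposal is correct and follows essentially the same route as the paper: induction on $|V(G)|$, the Alilooee--Faridi formula for $\pd(R_{C_n}/J_3(C_n))$ in the cycle case, and \Cref{chordal closed neighborhood} combined with \Cref{forest pd} in the leaf case (you are in fact slightly more careful than the paper, which calls $G'=G\setminus N_G[x]$ ``either a forest or a unicyclic graph'' where you correctly decompose it into tree components plus at most one unicyclic component and sum via \Cref{reg sum}). One small correction: the Alilooee--Faridi formula is governed by the residue of $n$ modulo $4$, not modulo $3$, though the verification that the resulting value dominates $\lfloor n/2\rfloor$ goes through in every residue class.
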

    \begin{proof}
        We prove this by induction on $|V(G)|$. If $|V(G)|\le 2$, then we see that $\pd(R_G/NI(G))\ge a_G$. Therefore, we may assume that $|V(G)|\ge 3$.

         First consider the case when $G=C_n$, a cycle of length $n$.  Note that the matching number $a_{C_n}=\lfloor \frac{n}{2} \rfloor$. Now since $NI(C_n)=J_3(C_n)$, by \cite[Corollary 5.5]{AlFa} we get $\pd(R_{C_n}/NI(C_n))=\frac{n}{2}$ if $n$ is divisible by $4$ and $\pd(R_{C_n}/NI(C_n))=\frac{n-d+2}{2}$, otherwise, where $d$ is the remainder when $n$ is divided by $4$. Thus in both cases $\pd(R_{C_n}/NI(C_n))\ge a_{C_n}$

        Now suppose $G$ is not a cycle. Then, $G$ contains a leaf say $x$ with $N_G(x)=\{y\}$. Note that $G'=G\setminus N_G[x]$ is either a forest or a unicyclic graph. If $G'$ is a forest, then by \Cref{forest pd} we have, $\pd(R_{G'}/NI(G'))\ge a_{G'}$. Now suppose $G'$ is a unicyclic graph. Since $|V(G')|<|V(G)|$, by the induction hypothesis we have, $\pd(R_{G'}/NI(G'))\ge a_{G'}$. Hence, using \Cref{chordal closed neighborhood} we obtain, $\pd(R_G/NI(G))\ge a_G$. This completes the proof.
    \end{proof}

\begin{remark}\label{cycle remark}
    If $G=C_5$, the cycle graph of length $5$, then $\pd(R_{C_5}/NI(C_5))=3>a_{C_5}=2$. Also, if $G=C_7$, the cycle graph of length $7$, then $\reg(R_{C_7}/NI(C_7))=4>a_{C_5}=3$. Thus both the inequalities in \Cref{unicyclic} and \Cref{thm:conj} could be strict in the case of unicyclic graphs.
\end{remark}
\begin{remark}\label{chordal not bounded below}
    We do not have, in general, $\pd(R_G/NI(G))\ge a_G$, even if $G$ is a chordal graph. For example, let $G=K_m$, the complete graph on $m$ vertices, where $m\ge 4$. Then, from  \Cref{complete graph remark} we have $\pd(R_{K_m}/NI(K_m))=1$. However, $a_{K_m}=\lfloor \frac{m}{2}\rfloor$. Hence, $a_{K_m}>\pd(R_{K_m}/NI(K_m))$.
\end{remark}

We end this section by comparing the projective dimension and the matching number of the wheel graphs. Let $W_{n+1}$ denote the wheel graph on $n+1$ vertices $\{x,y_1,\ldots,y_n\}$ with the edge set $E(W_{n+1})=\{\{x,y_i\},\{y_j,y_{j+1}\},\{y_1,y_n\}: 1\le i\le n,1\le j\le n-1\}$. It is easy to see $a_{W_{n+1}}=\lfloor\frac{n+1}{2}\rfloor$. Moreover, $NI(W_{n+1})=x\cdot NI(C_n)$, where $C_n$ is the cycle $y_1\cdots y_n$.
Hence, by the proof of \Cref{unicyclic} and by \Cref{extra variable} we obtain, $\pd(R_{W_{n+1}}/NI(W_{n+1}))=\frac{n}{2}$ if $n$ is divisible by $4$ and $\pd(R_{W_{n+1}}/NI(W_{n+1}))=\frac{n-d+2}{2}$, otherwise, where $d$ is the remainder when $n$ is divided by $4$. Thus comparing the formulas above, we obtain the following.

\begin{corollary}\label{wheel result}
    Let $W_{n+1}$ denote the wheel graph on $n+1$ vertices. Then, 
    \begin{equation*}
        \pd(R_{W_{n+1}}/NI(W_{n+1}))=
        \begin{cases}
            a_G & \text{if } n\not\equiv 3\pmod 4,\\
            a_G-1 & \text{if } n\equiv 3\pmod 4.
        \end{cases}
    \end{equation*}
\end{corollary}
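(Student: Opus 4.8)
The plan is to leverage the reduction already recorded in the paragraph preceding the statement. First I would recall that $NI(W_{n+1})=x\cdot NI(C_n)$, where $C_n$ is the rim cycle $y_1\cdots y_n$: indeed, for a rim vertex $y_i$ one has $m_{(W_{n+1},y_i)}=x\cdot m_{(C_n,y_i)}$, while the hub generator $m_{(W_{n+1},x)}=xy_1\cdots y_n$ is divisible by $m_{(W_{n+1},y_1)}$ (for $n\ge 3$) and hence is not among the minimal generators of $NI(W_{n+1})$. Applying \Cref{extra variable} to the ideal $NI(C_n)\subseteq R_{C_n}$ together with the extra variable $x$ then gives $\pd(R_{W_{n+1}}/NI(W_{n+1}))=\pd(R_{C_n}/NI(C_n))$. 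By the $t=3$ case of the Alilooee--Faridi formula \cite[Corollary 5.5]{AlFa}, exactly as used in the proof of \Cref{unicyclic}, the right-hand side equals $n/2$ if $4\mid n$ and $(n-d+2)/2$ otherwise, where $d$ is the remainder of $n$ modulo $4$.

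It then remains to compare this quantity with $a_{W_{n+1}}=\lfloor (n+1)/2\rfloor$ according to the value of $n$ modulo $4$. If $n\equiv 0\pmod 4$, then $\pd=n/2=\lfloor(n+1)/2\rfloor=a_{W_{n+1}}$; if $n\equiv 1\pmod 4$, then $d=1$ and $\pd=(n+1)/2=\lfloor(n+1)/2\rfloor=a_{W_{n+1}}$; if $n\equiv 2\pmod 4$, then $d=2$ and $\pd=n/2=\lfloor(n+1)/2\rfloor=a_{W_{n+1}}$; finally, if $n\equiv 3\pmod 4$, then $d=3$ and $\pd=(n-1)/2=(n+1)/2-1=a_{W_{n+1}}-1$. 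This is precisely the claimed dichotomy, with the exceptional behaviour occurring exactly when $n\equiv 3\pmod 4$.

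Since every step is either an already-established lemma (\Cref{extra variable}), a verbatim reuse of the computation inside the proof of \Cref{unicyclic}, or the elementary floor-function bookkeeping above, I do not expect any genuine obstacle here; the only point requiring a moment's care is confirming that the hub generator $m_{(W_{n+1},x)}$ is redundant, so that $NI(W_{n+1})$ really is $x\cdot NI(C_n)$, and checking that the degenerate case $n=3$ (where $W_4=K_4$ and $NI(K_4)=\langle xy_1y_2y_3\rangle$, so $\pd(R_{W_4}/NI(W_4))=1$) fits the formula, giving $\pd=1=a_{W_4}-1$ in accordance with $3\equiv 3\pmod 4$.
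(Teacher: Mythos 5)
Your argument is correct and follows the paper's own route exactly: the identity $NI(W_{n+1})=x\cdot NI(C_n)$ (after discarding the redundant hub generator), \Cref{extra variable} to drop the factor $x$ from the projective dimension, the Alilooee--Faridi formula via the computation in the proof of \Cref{unicyclic}, and the comparison with $a_{W_{n+1}}=\lfloor (n+1)/2\rfloor$ case by case modulo $4$. Your explicit check that the hub generator is non-minimal and your verification of the degenerate case $n=3$ are welcome details that the paper leaves implicit.
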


\section{Concluding remarks}\label{section 4}
In \Cref{thm:conj}, we saw that the topological results \cite[Theorem 1.1 $\&$ Lemma 2.3]{MaWa} of Matsushita and Wakatsuki can be used to show that the inequality $\reg(R_G/NI(G))\ge a_G$ is true for all graphs. Moreover, the idea of Betti splitting can also be used to show the same result related to regularity for chordal graphs and unicyclic graphs. The proofs would be similar to that of projective dimension case for trees (by replacing leaf to a simplicial vertex) and unicyclic graphs. Thus, the following is a natural question to ask. 

\begin{question}
    Is there an algebraic proof of \Cref{thm:conj}?
\end{question}

The forests and the unicyclic graphs are the two classes of graphs for which we proved that $\pd(R_G/NI(G))\ge a_G$. Interestingly, for chordal graphs, we do not have this inequality, and the simplest example is any complete graph on at least $4$ vertices. Given this, we ask the following.

\begin{question}
    For which classes of graphs the projective dimension of the quotient of the closed neighborhood ideal is bounded below by the matching number of the corresponding graph?
\end{question}

As mentioned above, in \Cref{thm:conj} we have shown that if $G$ is any finite simple graph then $\reg(R_G/NI(G))\ge a_G$. Moreover, for forests, we have proved in \Cref{conjecture proof} that $\reg(R_G/NI(G))=a_G$. However, for various other classes of graphs, the equality does not hold as seen in \Cref{section 3}. It is worth mentioning that even in the case of complete graphs, the difference between the regularity of the closed neighborhood ideal and the matching number can be made arbitrarily large (see \Cref{complete graph remark}). In the case of complete bipartite graphs, the following example shows that the equality between regularity and the matching number does not hold, and the difference can also be made arbitrarily large. 

\begin{example}
    Let $G=K_{n,2}$ with the vertex set $\{x_1,\ldots,x_n,y_1,y_2\}$. Then, $a_G=2$. However, the ideal $\langle NI(G),y_2\rangle=\langle y_2, x_1x_2\cdots x_ny_1\rangle$. Note that $\reg(R_G/\langle y_2, x_1x_2\cdots x_ny_1\rangle)=n$. Hence, by \Cref{lemma1}, $\reg(R_G/NI(G)\ge n$.
\end{example}

Given the above observations, the following is a natural question to ask.



\begin{question}
    Classify all graphs $G$ such that $\reg(R_G/NI(G))=a_G$.
\end{question}

\section*{Acknowledgements}
The authors would like to thank the referees for their careful reading and helpful suggestions. The authors would also like to express their gratitude to Yusuf Civan for bringing reference \cite{MaWa} to their attention, as well as for pointing out the connection of \Cref{thm:conj} with this reference. Shiny Chakraborty would like to thank SERB, DST for support through scholarship during her stay at IIT Bhilai. Ajay P. Joseph would like to express his gratitude to the National Institute of Technology Karnataka for the Doctoral Research Fellowship. Amit Roy is partially supported by a grant from the Infosys Foundation and a postdoctoral research fellowship from CMI, India. He also acknowledges financial support from DAE, Government of India for a postdoctoral research fellowship during his stay in NISER Bhubaneswar, India.  Work on this project started when AR visited AS at the Indian Institute of Technology Bhilai, India. They thank IIT Bhilai for the hospitality. Anurag Singh is partially supported by the Start-up Research Grant SRG/2022/000314 from SERB, DST, India.

\subsection*{Data availability statement} Data sharing is not applicable to this article as no new data were created or
analyzed in this study.

\subsection*{Conflict of interest} The authors declare that they have no known competing financial interests or personal
relationships that could have appeared to influence the work reported in this paper.

\bibliographystyle{abbrv}

\end{document}